\def\Z{\ifmmode{\mathbb Z}\else{$\mathbb Z$}\fi}
\def\C{\ifmmode{\mathbb C}\else{$\mathbb C$}\fi}
\def\Q{\ifmmode{\mathbb Q}\else{$\mathbb Q$}\fi}
\def\K{\ifmmode{\mathbb K}\else{$\mathbb K$}\fi}
\def\P{\ifmmode{\mathbb P}\else{$\mathbb P$}\fi}
\def\R{\ifmmode{\mathbb R}\else{$\mathbb R$}\fi}
\def\H{\ifmmode{\mathbb H}\else{$\mathbb H$}\fi}
\def\g{\ifmmode{\mathfrak g}\else {$\mathfrak g$}\fi}
\def\h{\ifmmode{\mathfrak h}\else {$\mathfrak h$}\fi}
\def\a{\ifmmode{\mathfrak a}\else {$\mathfrak a$}\fi}
\def\k{\ifmmode{\mathfrak k}\else {$\mathfrak k$}\fi}
\def\p{\ifmmode{\mathfrak p}\else {$\mathfrak p$}\fi}
\def\b{\ifmmode{\mathfrak b}\else {$\mathfrak b$}\fi}
\def\n{\ifmmode{\mathfrak n}\else {$\mathfrak n$}\fi}
\def\m{\ifmmode{\mathfrak m}\else {$\mathfrak m$}\fi}
\def\t{\ifmmode{\mathfrak t}\else {$\mathfrak t$}\fi}
\def\O{\ifmmode{\mathcal{O}}\else {$\mathcal{O}$}\fi}
\def\W{\ifmmode{\mathcal{W}}\else {$\mathcal{W}$}\fi}
\def\so{\ifmmode{\mathfrak {so}(n)} \else {$\mathfrak {so} (n)$}\fi}
\def\soc{\ifmmode{\mathfrak {so}(n,\C)} \else {$\mathfrak {so} (n,\C)$}\fi}
\def\u {\ifmmode{\mathfrak {u}(n)} \else {$\mathfrak {u} (n)$}\fi}
\def\su {\ifmmode{\mathfrak {su}(n)} \else {$\mathfrak {su} (n)$}\fi}
\def\sp {\ifmmode{\mathfrak {sp}(n)} \else {$\mathfrak {sp} (n)$}\fi}
\def\spc {\ifmmode{\mathfrak {sp}(n,\C)} \else {$\mathfrak {sp} (n,\C)$}\fi}
\def\slr {\ifmmode{\mathfrak {sl}(n,\R)} \else {$\mathfrak {sl} (n,\R)$}\fi}
\def\sl {\ifmmode{\mathfrak {sl}(n,\C)} \else {$\mathfrak {sl} (n,\C)$}\fi}
\def\slh {\ifmmode{\mathfrak {sl}(m,\H)} \else {$\mathfrak {sl} (n,\H)$}\fi}
\def\sup {\ifmmode{\mathfrak {su}(p,q)} \else {$\mathfrak {su} (p,q)$}\fi}
\def\gl {\ifmmode{\mathfrak {gl}(n,\R)} \else {$\mathfrak {gl} (n,\R)$}\fi}
\def\glc{\ifmmode{\mathfrak {gl}(n,\C)} \else {$\mathfrak {gl} (n,\C)$}\fi}
\def\glh{\ifmmode{\mathfrak {gl}(n,\H)} \else {$\mathfrak {gl} (n,\H)$}\fi}
\def\SLR{{SL(n,\R)}}
\def\SLH{{SL(n,\H)}}
\def\SLC{{SL(n,\C)}}
\def\SUP{{SU(p,q)}}
\newtheorem{thm}{Theorem}
\newtheorem{prop}[thm]{Proposition}
\newtheorem{defi}[thm]{Definition}
\newtheorem{lemma}[thm]{Lemma}
\newtheorem{cor}[thm]{Corollary}
\title{\hspace{-8pt} Schubert duality for $\SLR$-flag domains}
\author{Ana-Maria Brecan}
\date{}
\begin{document}
\maketitle
\begin{abstract}
\noindent
This paper is concerned with the study of spaces of naturally defined cycles associated to $SL(n,\R)$-flag domains. These are compact complex submanifolds in open orbits of real semisimple Lie groups in flag domains of their complexification. It is known that there are optimal Schubert varieties which intersect the cycles transversally in finitely many points and in particular determine them in homology. Here we give a precise description of these Schubert varieties in terms of certain subsets of the Weyl group and compute their total number. Furthermore, we give an explicit description of the points of intersection in terms of flags and their number.
\end{abstract}

\footnote{2010 Mathematics Subject Classification: 14M15.\\
Key words: Flag domains, Cycle spaces, Schubert varieties, Transversality.\\ Supported by SPP 1388 of the DFG.
}
\section{Background}
Here we deal with complex flag manifolds $Z=G/P$, with $G$ a complex semisimple Lie group and $P$ a complex parabolic subgroup, and consider the action of a real form $G_0$ of $G$ on $Z$. The real form $G_0$ is the connected real Lie group associated to the fixed point Lie algebra $\mathfrak{g}_0$ of an antilinear involution $\tau:\mathfrak{g}\rightarrow \mathfrak{g} $. It is known that $G_0$ has only finitely many orbits in $Z$ and therefore it has at least one open orbit. These and many other results relevant at the foundational level are proved in \cite{Wolf1969}. They are also summarised in the research monograph \cite{Fels2006}.
 Our work here is motivated by recent developments in the theory of cycle spaces of such a flag domain D. These arise as follows. Consider a choice $K_0$ of a maximal compact subgroup of $G_0$, i.e. $K_0$ is given by the fixed point set of a Cartan involution $\theta:G_0\rightarrow G_0$. Then $K_0$ has a unique orbit in $D\subset Z$, denoted by $C_0$, which is a complex submanifold of $D$. Equivalently, if $K$ denotes the complexification of $K_0$, one could look at $C_0$ as the unique minimal dimensional closed $K$-orbit in $D$. If $\dim C_0=q$, then $C_0$ can be regarded as a point in the Barlet space associated to $D$, namely $C^q(D)$, \cite{Barlet1975}. By definition, the objects of this space are formal linear combinations  $C=n_1C_1+\dots +n_kC_k$, with positive integral coefficients, where each $C_j$ is an irreducible $q$-dimensional compact subvariety of $D$. In this context $C_0$ is called the \textit{base cycle} associated to $K_0$. It is known that $C^q(D)$ is smooth at $C_0$ and thus one can talk about the irreducible component of $C^q(D)$ that contains $C_0$.  
\noindent
\bigskip
\newline
It is a basic method of Barlet and Koziarz, \cite{Barlet2000}, to transform functions on transversal slices to the cycles to functions on the cycle space. In the case at hand these transversal slices can be given using a special type of Schubert varieties which are defined with the help of the Iwasawa decomposition of $G_0$ (see part II of \cite{Fels2006} and the references therein). Recall that this is a global decomposition that exhibits $G_0$ as a product $K_0A_0N_0$, where each of the members of the decomposition are closed subgroups of $G_0$, $K_0$ is a maximal compact subgroup and $A_0N_0$ is a solvable subgroup. The Iwasawa decomposition is used to describe a type of Borel subgroups of $G$ which in a sense are as close to being real as possible. 
We define an \textit{Iwasawa-Borel} subgroup $B_I$ of $G$ to be a Borel subgroup that contains an Iwasawa-component $A_0N_0$ and we call the closure of an orbit of such a $B_I$ in $Z$ an \textit{Iwasawa-Schubert variety}. The Iwasawa-Borel subgroup can be equivalently obtained at the level of complex groups as follows. If $(G,K)$ is a symmetric pair, i.e. $K$ is defined by a complex linear involution, and $P=MAN$ where as usual $M$ is the centraliser of $A$ in $K$, then any such $B_I$ is given by choosing a Borel subgroup in $M$ and adjoining it to $AN$.

\noindent
\bigskip
\newline
The following result, \cite[p.101-104]{Fels2006}, has provided the motivation for our work.
\begin{thm}
If $S$ is an Iwasawa-Schubert variety such that $\dim S+ \dim C_0=\dim D$ and $S\cap C_0\ne \emptyset$ then the following hold:
\begin{enumerate}
\item{S intersects $C_0$ in only finitely many points $z_1,\dots, z_{d_s}$,}
\item{For each point of intersection the orbits $A_0N_0.z_j$ are open in $S$ and closed in $D$,}
\item{The intersection $S\cap C_0$ is transversal at each intersection point $z_j$.}
\end{enumerate}
\end{thm}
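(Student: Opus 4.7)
The plan is to exploit the Iwasawa decomposition at both the group and Lie algebra levels. At the Lie algebra level, $\g_0 = \k_0 \oplus \a_0 \oplus \n_0$, and the infinitesimal action $\phi_z \colon \g_0 \to T_z D$ is surjective (because $D$ is open in the $G_0$-orbit of $z$), sending $\k_0$ onto $T_z C_0$ and $\a_0 \oplus \n_0$ onto $T_z(A_0N_0.z)$. At the group level, $A_0N_0 \subset B_I$ stabilises $S$, so that the orbit $A_0N_0.z$ lies in $S$ for every $z \in S \cap C_0$.

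The main structural input I would invoke from the Wolf theory summarised in \cite{Fels2006} is the following base-cycle dimension formula: for any $z \in C_0$ one has
\[
\dim_\R A_0N_0.z = \dim_\R D - \dim_\R C_0,
\]
and the orbit $A_0N_0.z$ is closed in $D$. Geometrically this expresses that the Iwasawa orbit through a base-cycle point is a closed transversal slice to $C_0$, and it reflects a compatible Iwasawa splitting of the isotropy $(G_0)_z$ at points of $C_0$.

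With these ingredients I would argue as follows. Let $z \in S \cap C_0$. The hypothesis $\dim_\C S + \dim_\C C_0 = \dim_\C D$ combined with the dimension formula gives $\dim_\R A_0N_0.z = \dim_\R S$. Since $A_0N_0$ is connected and $A_0N_0.z \subset S$ has the maximal possible dimension, it must meet the smooth open $B_I$-orbit $B_I.z_S$, force $z \in B_I.z_S$, and be open in $S$; this yields the first half of (2) and places $z$ in the smooth locus of $S$, so $T_z S = \phi_z(\a_0 \oplus \n_0)$. Together with $T_z C_0 = \phi_z(\k_0)$ and the surjectivity $\phi_z(\g_0) = T_z D$, we get $T_z D = T_z C_0 + T_z S$, and the complementary-dimension relation promotes this sum to a direct sum, establishing the transversality (3). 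Since $S$ is closed in the compact $Z$ and $C_0$ is compact, the intersection $S\cap C_0$ is compact; transversality at each point makes it a compact $0$-dimensional complex manifold, hence finite, which proves (1). The remaining closedness assertion in (2) is exactly the closedness part of the structural fact cited above.

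The principal obstacle is the dimension equality $\dim_\R A_0N_0.z = \dim_\R D - \dim_\R C_0$ and the closedness of $A_0N_0.z$ in $D$ for $z \in C_0$. Neither follows from formal considerations; both rest on the Iwasawa-compatible structure of the isotropy at a base-cycle point, which uses in an essential way that $C_0$ is a complex submanifold and is the unique minimal closed $K$-orbit in $D$. Once these Wolf-theoretic ingredients are granted, the proof reduces to the transparent tangent-space dimension count plus compactness of $S\cap C_0$ outlined above.
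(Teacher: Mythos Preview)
The paper does not prove this theorem. It is quoted verbatim as a known result from \cite[p.~101--104]{Fels2006} and serves purely as motivation; the paper then \emph{uses} it (together with the companion facts that $S_w\cap D\subset \mathcal{O}_w$ and that no Schubert variety of dimension below $\operatorname{codim} C_0$ meets $C_0$) but nowhere reproves it. So there is no ``paper's own proof'' to compare against.

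That said, your sketch is a faithful reconstruction of the argument in the Fels--Huckleberry--Wolf monograph. The logical skeleton --- Iwasawa splitting $\g_0=\k_0\oplus\a_0\oplus\n_0$, the dimension identity $\dim_\R A_0N_0.z=\dim_\R D-\dim_\R C_0$ forcing $A_0N_0.z$ to be open in $S$, the tangent decomposition $T_zD=\phi_z(\k_0)+\phi_z(\a_0\oplus\n_0)=T_zC_0+T_zS$ promoted to a direct sum by complementary dimensions, and then compactness plus $0$-dimensionality for finiteness --- is exactly the mechanism. You are also candid that the only nontrivial input is the dimension formula and the closedness of $A_0N_0.z$ in $D$ for $z\in C_0$; both rest on the Iwasawa-compatible structure of the isotropy at base-cycle points and are established in the cited source rather than being formal.

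One small tightening: to conclude that $z$ lies in the open $B_I$-orbit (and hence in the smooth locus of $S$, which you need before writing $T_zS$), observe that $A_0N_0.z\subset B_I.z$ because $A_0N_0\subset B_I$, whence $\dim_\R B_I.z\ge \dim_\R A_0N_0.z=\dim_\R S$, forcing $B_I.z$ to be the dense cell. This is implicit in your ``must meet the smooth open $B_I$-orbit'' clause, but since $S$ is typically singular it is worth saying explicitly rather than appealing to a top-dimensional-submanifold-is-open heuristic.
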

\noindent
For the next steps in this general area, for example computing in concrete terms the trace transform indicated above, we feel that it is important to understand precisely the combinatorial geometry of this situation. This means in particular to describe precisely which Schubert varieties intersect the base cycle $C_0$, their points of intersection and the number of these points. In particular such results will describe the base cycle $C_0$ (or any cycle in the corresponding cycle space) in the homology ring of the flag manifold $Z$. 
We have done this for the classical semisimple Lie group $\SLC$ and its real forms $\SLR$, $\SUP$ and $\SLH$ using methods which would seem sufficiently general to handle all classical semisimple Lie groups. 
\noindent
\bigskip
\newline
The present paper deals with the real form $\SLR$, while the results for the other two real forms can be found in the author's thesis, \cite{Ana}.
The description of the Schubert varieties is formulated combinatorially in terms of elements of the Weyl group of $G$. Interesting combinatorial conditions arise and the tight correspondence in between combinatorics and geometry is made explicit. 
Here, up to orientation we have only one open orbit. In the case of the full flag manifold, the Weyl group elements that parametrize the Schubert varieties of interest can be obtained from a simple game that chooses pairs of consecutive numbers from the ordered set $\{1,\dots, n\}$. Moreover, their total number is also a well-known number, the double-factorial. Surprisingly, the number of intersection points with the base cycle does not depend on the Schubert variety but only on the dimension sequence and in each case it is $2^{n/2}$. The main results are presented in \textbf{Theorem 8} and \textbf{Theorem 10}. In the case of the partial flag manifold the results depend on whether the open orbit is measurable or not. In the measurable case the main results are found in \textbf{Theorem 14} and \textbf{Theorem 16}, and in the non-measurable case in \textbf{Proposition 19}.

\section{The case of the real form $\SLR$}
\subsection{Preliminaries.}
Let $G=\SLC$ and $P$ be a parabolic subgroup of $G$ corresponding to a dimension sequence $d=(d_1,\dots, d_s)$ with $d_1+\dots + d_s=n$, i.e. $P$ is given by block upper triangular matrices of sizes $d_1$ up to $d_s$, respectively. Recall that in this case the flag manifold $Z=G/P$ can be identified with the set of all \textit{partial flags of type} $d$, namely $\{V: 0\subset V_1\subset \dots \subset V_s=\C^n\}$, where $\dim(V_i/V_{i-1})=d_i,\, \forall 1\le i \le s,$ $\dim V_0=0$. Equivalently, $Z$ can be defined with the help of the sequence $\delta=(\delta_1,\dots, \delta_s)$, with $\delta_i:=\sum_{k=1}^id_k=\dim V_i$, for all $1\le i \le s$. If $(e_1,\dots, e_n)$ is the standard basis in $\C^n$, the flags consisting of subspaces spanned by elements of this basis are called \textit{coordinate flags}. In the particular case when each $d_i=1$ we have a complete flag and the corresponding full flag variety is identified with the homogeneous space $\hat{Z}=G/B$, with $B$ the Borel subgroup of upper triangular matrices in $G$. In terms of the dimension sequence $d$ we have that $\dim Z=\sum_{1\le i < j \le s}d_id_j$. For each $d$ a fibration $\pi:\hat{Z}\rightarrow Z$ is defined by sending a complete flag to its corresponding partial flag of type $d$.
\noindent
\bigskip
\newline
Let us look at $\C^n$ equipped with the standard real structure $\tau: \C^n\rightarrow \C^n$, $\tau(v)=\overline{v}$ and the standard non-degenerate complex bilinear form $b:\C^n\times\C^n\rightarrow \C$, $b(v,w)=v^t\cdot w$ and view $G$ as the group of complex linear transformations on $\C^n$ of determinant $1$. Moreover, let $h:\C^n\times \C^n\rightarrow \C$ be the standard Hermitian form defined by $h(v,w)=b(\tau(v),w)=\overline{v}^t\cdot w$.
It follows that $G_0:=\{A\in G:\, \tau\circ A=A\circ \tau\}=\SLR$. If $\theta$ denotes both the Cartan involution on $G_0$ and on $G$ defined by $\theta(A)=(A^{-1})^t$, then $K_0:=SO(n,\R)$ and its complexification $K:=SO(n,\C)$ are both obtained as fixed points of the respective $\theta$'s. Fix the Iwasawa decomposition $G_0=\SLR=K_0A_0N_0$, where $A_0N_0$ are the upper triangular matrices with positive diagonal entries in $\SLR$. Thus, in this special case, the Iwasawa Borel subgroup $B_I$ is just the standard Borel subgroup of upper triangular matrices in $\SLC$.
\noindent
\bigskip
\newline
The following definitions give a geometric description in terms of flags of the open $G_0$-orbits in $Z$ and the base cycles associated to this open orbits. These results can be found in \cite{Huckleberry2001} and \cite{Huckleberry2002}.
\begin{defi}
A flag $z=(0 \subset V_{1}\subset \dots \subset V_{s}\subset \mathbb{C}^n)$ in $Z=Z_d$ is said to be $\tau$-\textbf{generic} if $dim(V_i\cap \tau(V_j))=max \{0,\delta_i+\delta_j-n\},\ \forall 1\le i,j\le s$. In other words, these dimensions should be minimal.
\end{defi}
\noindent
Note that in the case of $Z=G/B$ the condition of  $\tau$-genericity is equivalent to $$\tau(V_{j})\oplus V_{n-{j}}=\mathbb{C}^n, \quad \forall 1\le j \le [n/2].$$
\begin{defi}
A flag $z=(0 \subset V_{1}\subset \dots \subset V_{s}\subset \mathbb{C}^n)$ in $Z=Z_d$ is said to be \textbf{isotropic} if either $V_i \subseteq V_j^{\perp}$ or $V_i^\perp \subseteq V_j,\, \forall 1\le i,j \le s$. In other words, $$dim(V_i\cap V_j^\perp)=min\{\delta_i,n-\delta_j\}.$$ 
\end{defi}
\noindent
Note that in the case of $Z=G/B$ and $m=[n/2]$, the isotropic condition on flags is equivalent to $V_i\subset V_i^{\perp}$ for all $1\le i\le m$, $V_m=V_m^\perp$, if $n$ is even and the flags $V_{n-i}$ are determined by $V_{n-i}=V_{i}^{\perp},$ for all $1\le i \le m$.
\noindent
\bigskip
\newline
If $n=2m+1$ the unique open $G_0$-orbit is described by the set of $\tau$-generic flags. If $n=2m$ a notion of orientation arises on $\C^n_{\R}$ that is independent on the choice of basis. Since $G_0$ preserves orientation in this case we have two open orbits defined by the set of positively oriented $\tau$-generic flags and by the set of negatively oriented $\tau-$generic flags. One can define a map that reverses orientation and interchanges the two open orbits. It is therefore enough to only consider the open orbit defined by the positively-oriented flags. In each of the open orbits the base cycle $C_0$ is characterised by the set of isotropic flags.   
\noindent
\bigskip
\newline
Finally, recall the definition of Schubert varieties in a general flag manifold $Z=G/P$ and a few interesting properties in order to establish notation. In general, for a fixed Borel subgroup $B$ of $G$, a $B$-orbit $\mathcal{O}$ in $Z$ is called a Schubert cell and the closure of such an orbit is called a Schubert variety. A Schubert cell $\mathcal{O}$ in $Z$ is parametrized by an element $w$ of the Weyl group of $G$ and $Z$ is the disjoint union of finitely many such Schubert cells. Furthermore, the integral homology ring of $Z$, $H_*(Z,\Z)$ is a free $\Z$-module generated by the set of Schubert varieties. 
\noindent
\bigskip
\newline
If $G=\SLC$ and $T$ is the maximal torus of diagonal matrices in $G$, then the Weyl group of $G$ with respect to $T$ can be identified with $\Sigma_n$, the permutation group on $n$ letters. Moreover, the complete coordinate flags in $G/B$ are in $1-1$ correspondence with elements of $\Sigma_n$. Given a complete coordinate flag $$<e_{i_1}>\subset \dots \subset <e_{i_1},e_{i_2},\dots, e_{i_k}>\subset \dots \subset \mathbb{C}^n,$$ one can define a permutation $w$ by $w(k)=i_k$ for all $k$ and viceversa. The complete coordinate flags are also in $1-1$ correspondence with permutation matrices in $GL(n,\C)$. Given an element $w\in \Sigma_n$ one obtains a permutation matrix with column $i^{th}$ equal to $e_{w(i)}$ for each $i$. 
For this reason we use the symbol $w$ for both an element of $\Sigma_n$ in one line notation, i.e. w(1)w(2)\dots w(n), or for the corresponding permutation matrix. It will be clear from the context to which kind of representation we are referring to. 
\noindent
\bigskip
\newline
The fixed points of the maximal torus $T$ in $G/B$ are the coordinate flags $V_w$ for $w\in W$ and $G/B$ is the disjoint union of the Schubert cells $\mathcal{O}_w:=BV_w$, where $w\in W$. Since $\SLC/B$ is isomorphic to $GL(n,C)/B'$, where $B'$ are the upper triangular matrices in $GL(n,\C)$ we have another useful way of visualising Schubert cells via their matrix canonical form. If $\mathcal{O}_w$ is a given Schubert cell parametrized by $w\in W$,  then it can be represented as the matrix $Bw$ in which the lowest nonzero entry in each column is a 1 (on the $i^{th}$ column the $1$ lies on row $w(i)$) and the entries to the right of each leading $1$ are all zero. What is basically done is filling the permutation matrix with $*$'s above each $1$, having in mind the rule that to the right of each $1$ the elements must be zero. This leads us to the observation that the dimension of the Schubert cell $\mathcal{O}_w$ is given by the number of inversions in the permutation $w$, that is the length of $w$, or the number of $*$'s in the matrix representation.  
\noindent
\bigskip
\newline
Schubert cells and varieties can also be defined in $G/P$. Those will be indexed by elements of the coset $\Sigma_n/{\Sigma_{d_1}\times \Sigma_{d_2}\times \dots \times \Sigma_{d_s}}$ and each right coset contains a minimal representative, i.e. a unique permutation $w$ such that $w(1)<\dots < w(d_1), w(d_1+1)<\dots <w(d_1+d_2), \dots , w(d_1+\dots d_{s-1}+1)<\dots<w(d_1+\dots+d_s)=w(n)$. The dimension of the Schubert cell $C_{wP}:=BwP/P$ is the length of the minimal representative $w$ and there is a unique lift to a Schubert cell in $G/B$ of the same dimension, namely $\mathcal{O}_w:=BwB/B$. When working with $\mathcal{O}_{wP}$ we can thus use the same matrix representation as for $\mathcal{O}_w$ and many times we will refer to $\mathcal{O}_{wP}$ just by $\mathcal{O}_w$.

\subsection {Dimension-related computations}
\noindent
It is important for our discussion to compute the dimension of the base cycle and of the respective dual Schubert varieties in both the case of $G/B$ and of $G/P$. In the case when $B$ is the standard Borel subgroup in $\SLC$ and $K=SO(n,\C)$, the cycle $C_0$ is a compact complex submanifold of $D$ represented in the form $C_0=K.z_0\cong K/(K\cap B_{z_0})$ for a base point $z_0\in D$, where $K\cap B_{z_0}$ is a Borel subgroup of $K$. Since $C_0$ is a complex manifold $$dim\,C_0=dim\,T_{z_0}C_0=dim\,\mathfrak{k}/\mathfrak{k}\cap\mathfrak{b}_{z_0},$$ where $\mathfrak{k}$ is the Lie algebra associated to $K$ and $\mathfrak{b}$ is the Borel subalgebra associated to $B$.  Thus in the case when $n=2m$, $\dim C_0=m^2-m$ and the Schubert varieties of interest must be of dimension $m^2$. If $n=2m+1$, then $\dim C_0=m^2$ and the Schubert varieties of interest are among those of dimension $m^2+m$.
\noindent
\bigskip
\newline
\subsection{Introduction to the combinatorics}
The next two sections give a full description of the Schubert varieties of interest that intersect the base cycle $C_0$, the points of intersection and their number, in the case of an open $\SLR$-orbit $D$ in $Z$.
The first case to be considered is the case of $Z=G/B$, where $$\mathcal{S}_{C_0}:=\{S_w \text{ Schubert variety }: dim S_w+dim C_0=dimZ \text{ and } S_w\cap C_0 \ne \emptyset \}.$$ In what follows we describe the conditions that the element $w$ of the Weyl group that parametrizes the Schubert variety $S_w$ must satisfy in order for $S_w$ to be in $S_{C_0}$. One of the main ingredients for this is the fact that $S_w\cap D \subset \mathcal{O}_w $ and the fact that if $S_w\cap D\ne \emptyset$, then $S_w\cap C_0\ne \emptyset$. Moreover, no Schubert variety of dimension less than the codimension of the base cycle intersects the base cycle. These are general results that can be found in \cite[p.101-104]{Fels2006}
\begin{defi}
A permutation $w=k_1\dots k_ml_*l_m\dots l_1$ is said to satisfy the \textbf{spacing condition} if $l_i<k_i,\,\forall 1\le i\le m,$ where $l_*$ is removed
from the representation in the case $n=2m$.
\end{defi}
\noindent
For example, $265431$ satisfies the spacing condition, while $261534$ does not satisfy the spacing condition.
\begin{defi}
A permutation $w=k_1\dots k_ml_*l_m\dots l_1$ is said to satisfy the \textbf{double box contraction} condition if $w$ is constructed by the \textbf{immediate predecessor algorithm}: 
\newline
 $\bullet$ Start by choosing $k_1$ and $l_1:=k_1-1$ from the ordered set $\{1,\dots, n\}$. If we have chosen all the numbers up to $k_i$ and $l_i$ then to go to the step $i+1$ we make a choice of $k_{i+1}$ and $l_{i+1}$ from the ordered set $\{1,\dots, n\}-\{k_1,l_1,\dots,k_i,l_i\}$ in such a way that $l_{i+1}$ sits inside the ordered set at the left of $k_{i+1}$. 
\end{defi}
\noindent
Remark that a permutation that satisfies the double box contraction automatically satisfies the spacing condition as well, but not conversely. For example, $256341$ satisfies both the double box contraction and consequently the spacing condition while $265431$ does not satisfy the double box contraction even though it satisfies the spacing condition.
\noindent
\bigskip
\newline
The next results are meant to establish a tight correspondence between the combinatorics of the Weyl group elements that parametrize the Schubert varieties in $\mathcal{S}_{C_0}$ and the geometry of flags that describe the intersection points. Namely, we prove that the spacing condition on Weyl group elements corresponds to the $\sigma-$generic condition on flags. Similarly, the double box contraction condition on Weyl group elements corresponds to the isotropic condition on flags.  
\subsection{Main results}
The first result of this section describes the Schubert varieties that intersect the base cycle independent of their dimension.
\begin{prop}
A Schubert Variety $S_w$ corresponding to a permutation $$w=k_1\dots k_ml_*l_m\dots l_1,$$ where $l_*$ is removed from the representation in the case $n=2m$, has non-empty intersection with $C_0$ if and only if $w$ satisfies the spacing condition. 
\end{prop}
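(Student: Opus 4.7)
The reduction step: by the two general facts cited from \cite{Fels2006}, namely $S_w \cap D \subseteq \mathcal{O}_w$ and $S_w\cap D\neq \emptyset \iff S_w\cap C_0\neq \emptyset$, it is enough to prove that $\mathcal{O}_w\cap D\neq \emptyset$ if and only if the spacing condition holds. So the whole argument is recast as: exactly when does the open Schubert cell $\mathcal{O}_w$ contain a $\tau$-generic flag?

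For sufficiency (spacing implies non-empty intersection), I would use the matrix canonical form of $\mathcal{O}_w$ introduced in Section 2.1: the cell is represented by matrices with a $1$ in row $w(i)$ of column $i$, zeros below and to the right of that $1$, and free entries above. I would choose these free entries as generic complex numbers with nonzero imaginary parts and verify directly that the resulting flag satisfies $\tau(V_j)\oplus V_{n-j}=\mathbb{C}^n$ for all $1\leq j\leq [n/2]$. The spacing inequality $l_i<k_i$ controls the staircase of leading positions: each vector $v_{n-i+1}$ has its leading coordinate in row $l_i$, strictly below the leading row $k_i$ of $v_i$, so that the complex-conjugate vectors spanning $\tau(V_j)$ can, after choosing the free parameters off a proper subvariety, complete a basis of $\mathbb{C}^n$ together with generators of $V_{n-j}$. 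This is the step where the translation ``spacing on the Weyl group element $\leftrightarrow$ $\tau$-generic condition on flags'' (as announced in the paragraph before the proposition) becomes concrete.

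For necessity, suppose $l_{i_0}\geq k_{i_0}$ for some $i_0$; I would exhibit, for every $V\in \mathcal{O}_w$, a nonzero vector forced to lie in $\tau(V_{i_0})\cap V_{n-i_0}$. The starting point is the dimension identity for Schubert cells, $\dim(V_j\cap \langle e_1,\dots,e_k\rangle)=|\{i\leq j:w(i)\leq k\}|$, which also applies to $\tau(V_j)$ since the coordinate subspaces $F_k=\langle e_1,\dots,e_k\rangle$ are $\tau$-stable. Writing this out for the relevant pair $(j,k)=(i_0,k^{*})$ with $k^{*}$ chosen in terms of the failing index, one reads off a lower bound $\dim(\tau(V_{i_0})\cap F_{k^{*}})+\dim(V_{n-i_0}\cap F_{k^{*}})-k^{*}$ on $\dim(\tau(V_{i_0})\cap V_{n-i_0})$, and translating this bound through $w$ gives precisely the combinatorial inequality that the spacing condition is designed to avoid.

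The hardest part of the argument will be pinning down the right $(j,k)$ in the necessity direction: the naive dimension count in the flags $F_k$ does not always detect spacing failure at a single $(j,k)$, since one can have $|K_j\cap [k]|\leq |L_j\cap [k]|$ for every $(j,k)$ while some individual pair $(k_i,l_i)$ violates $l_i<k_i$. I expect the proof to proceed by either choosing $j$ and $k$ tailored to the smallest failing index $i_0$ (for example $j=i_0$ and $k=\max\{k_1,\dots,k_{i_0}\}$, and if this fails, enlarging $j$ to pick up a further $k_{i}=1$ or similarly constrained value), or by a direct analysis of the canonical form that exhibits the forced intersection vector explicitly. In either case, the combinatorial statement one really proves is: the spacing condition $l_i<k_i$ for all $i$ is equivalent to the system of dimension inequalities on $\mathcal{O}_w$ that characterize $\tau$-generic flags, thereby closing the iff.
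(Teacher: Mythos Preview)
Your reduction step matches the paper. For sufficiency, however, the paper does not argue by a generic-parameter choice: it carries out an explicit, inductive column-operation procedure on the matrices $[v_1\,\bar v_1\,\dots\,v_j\,\bar v_j\,v_{j+1}\,\dots\,v_{2m-j}]$, using the inequality $l_j<k_j$ at each step to locate a fresh pivot row and reduce the matrix to one representing points of the Schubert cell indexed by $k_1l_1\dots k_jl_jk_{j+1}\dots k_ml_m\dots l_{j+1}$. This shows that \emph{every} point of $\mathcal O_w$ is $\tau$-generic, not merely a Zariski-general one. Your sketch leaves open precisely the thing that needs checking, namely that the relevant determinants do not vanish identically on the cell; invoking ``generic with nonzero imaginary part'' without that verification is not yet a proof.

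For necessity the paper again works by direct matrix manipulation---taking $i$ to be the smallest index with $k_i<l_i$, it performs column operations on the matrix for $\tau(V_i)\oplus V_{2m-i}$ and argues that column $2i$ can be reduced to zero---rather than through your incidence bound with the coordinate flags $F_k$. Your own hesitation about that bound is justified and the obstruction is concrete: the existence of $(j,k)$ with $|K_j\cap[k]|>|L_j\cap[k]|$ is equivalent not to the spacing condition on the given pairing $(k_i,l_i)$ but only to the condition on the \emph{sorted} initial segments. For instance $w=4231$ (so $k_1=4$, $k_2=2$, $l_2=3$, $l_1=1$) fails spacing at $i=2$, yet one checks $|K_j\cap[k]|\le|L_j\cap[k]|$ for every $j\le 2$ and every $k$, so no choice of $(j,k)$ forces a nonzero intersection via your bound. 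The ``direct analysis of the canonical form'' you mention as a fallback is exactly the route the paper takes.
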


\begin{proof}
We use the fact that if a Schubert variety intersects the open orbit $D$, then it also intersects the cycle $C_0$ and prove that $S_w$ contains a $\tau$-generic point if and only if $w$ satisfies the spacing condition.
\noindent
\bigskip
\newline
First assume that $l_i<k_i$ for $i\le m$, $n=2m$. Under this assumption we need to prove that $$\tau(V_i)\oplus V_{2m-i}=\mathbb{C}^n\quad \forall i\le m,$$ where $V$ is an arbitrary flag in $S_w$. This is equivalent to showing that the matrix formed from the vectors generating $\tau(V_i)$ and $V_{2m-i}$ has maximal rank . Form the following pairs of vectors $(v_i,\bar{v}_i)$ and the matrices $$[v_1\bar{v}_1\dots v_i\bar{v}_iv_{i+1}\dots v_{2m-i}],\quad \forall i\le m.$$ These are the matrices corresponding to $$\tau(V_i)=<\bar{v}_1,\dots , \bar{v}_i>$$ and $$V_{2m-i}=<v_1,\dots, v_i, v_{i+1},\dots v_{2m-i}>.$$ We carry out the following set of operations keeping in mind that the rank of a matrix is not changed by row or column operations. The initial matrix is the canonical matrix representation of the Schubert cell $\mathcal{O}_w$. For the step $j=1$, $l_1<k_1$ and the last column corresponding to $l_1$ is eliminated from the initial matrix. 
\noindent
\bigskip
\newline
Next denote by $c_h$ the $h^{th}$ column of this matrices and obtain the following:
\newline
On the second column $c_2$ zeros are created on all rows starting with $k_1$ and going down to row $l_1+1$. This is done by subtracting suitable multiplies of $c_2$ from the columns in the matrix having a $1$ on these rows and putting the result on $c_2$. For example to create a zero on the spot corresponding to $k_1$ the second column is subtracted from the first column and the result is left on the second column.   On row $l_1$ a $1$ is created by normalising. This $1$ is the only $1$ on row $l_1$, because the last column that contained a $1$ on row $l_1$ was removed from the matrix. We now want to create zeros on $c_j$ for $j>2$ on row $l_1$. It is enough to consider those columns which have $1$'s on rows greater than $l_1$, because the columns with $1$'s on rows smaller than $l_1$ already have zeros below them. Finally, subtract from these columns suitable multiplies of $ c_2$.  We thus create a matrix that represents points in the Schubert variety $k_1l_1k_2\dots k_ml_m\dots l_2$, which obviously has maximal rank.
\noindent
\bigskip
\newline
Assume by induction that we have created the maximal rank matrix corresponding to points in the Schubert variety $k_1l_1k_2l_2\dots k_jl_jk_{j+1}\dots k_ml_m\dots l_{j+1}$. To go to the step $j+1$ remove from the matrix the last column corresponding to $l_{j+1}$ and add in between $c_{2j+1}$ and $c_{2j+2}$ the conjugate of $c_{2j+1}$ and reindex the columns. 
\noindent
\bigskip
\newline
On column $c_{2(j+1)}$ zeros are created on all rows starting with $k_{j+1}$ and going down to $l_{j+1}+1$ by subtracting suitable multiplies of $c_{2(j+1)}$ from the columns in the matrix having a $1$ on this rows and putting the result on $c_{2(j+1)}$. Next a $1$ is created on row $l_{2(j+1)}$ by normalisation. Again this is the only spot on row $l_{2(j+1)}$ with value $1$, because the column which had a $1$ on this spot was removed from the matrix. By subtracting suitable multiplies of $ c_{2(j+1)}$ from columns having a $1$ on spots greater than $l_{(j+1)}$, we create zeros on row $l_{j+1}$ at the right of the $1$ on column $c_{2(i+1)}$. We thus obtain points in the maximal rank matrix of the Schubert variety indexed by $k_1l_1k_2l_2\dots k_jl_jk_{j+1}l_{j+1}\dots k_ml_m\dots l_{j+2}$.   
\noindent
\bigskip
\newline
At step $j=m$ we obtain points in the maximal rank matrix of the Schubert variety indexed by $k_1l_1k_2l_2\dots k_ml_m$.
 For the odd dimensional case we insert in the middle a column corresponding to $l_*$ and observe that this of course does not change the rank of the matrix.
\noindent
\bigskip 
\newline
Conversely, if the spacing condition is not satisfied let $i$ be the smallest such that $k_i<l_i$ and look at the matrix $\tau(V_i)\oplus V_{2m-i}$. Then use the same reasoning as above to create a $1$ on the spot in the matrix corresponding to row $l_j$ and column $2j$ for all $j<i$ using our chose of $i$. Now there is a $1$ on each row in the matrix except on row $l_i$. Column $c_{2i}$ and $c_{2i-1}$ both have a $1$ on position $k_i$ and zeros bellow. Since $l_i>k_i$ this implies that for each $j<i$ there exist a column in the matrix that has $1$ on row $j$. Using these $1$'s we begin subtracting $c_{2i}$ from suitable multiplies of each such column, starting with $c_{2i-1}$ then going to the one that has 1 on the spot $k_i-1$, then to the one the has $1$ on the spot $k_i-2$ and so on and at each step the result is left on $c_{2i}$. This creates zeros on all the column $c_{2i}$ and proves that the matrix does not have maximal rank.
\end{proof}
\noindent
\textbf{Remark.} For dimension computations, recall how to compute the length $|w|$, of an element $w$ of $\Sigma_n$. Start with the number $1$ and move it from its position toward the left until it arrives at the beginning and associate to this its distance $p_1$ which is the number of other numbers it passes. Then move $2$ to the left until it is adjacent from the right to $1$ and compute $p_2$ in the analogous way. Continuing on compute $p_i$ for each $i$ and then the length of $w$ is just $\sum p_i$. 
\begin{lemma}
If $w=k_1\dots k_ml_*l_m\dots l_1$ satisfies the spacing condition and $|w|=m^2$ for the even dimensional case, or $|w|=m^2+m$ for the odd dimensional case, then $l_1=k_1-1$.
\end{lemma}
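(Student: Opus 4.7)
My plan is to prove the lemma by induction on $m$, simultaneously establishing the slightly stronger auxiliary statement that any permutation satisfying the spacing condition has length at least $m^2$ in the even case (respectively $m^2+m$ in the odd case). The base case $m=1$ is a direct check: in the even case $w=21$ is forced, and in the odd case one verifies $|w|\geq 2$ with equality achieved precisely by $231$ and $312$, both of which already satisfy $l_1=k_1-1$.

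For the inductive step the key step is to isolate the contribution of the two extreme entries $k_1$ and $l_1$ to the total inversion count. Since $k_1$ sits at position $1$, each of the $k_1-1$ smaller values contributes one inversion with $k_1$; similarly, since $l_1$ sits at position $n$, each of the $n-l_1$ larger values contributes one inversion with $l_1$; and the pair $(k_1,l_1)$, which is itself an inversion by the spacing condition, is counted twice. Thus the number of inversions involving $k_1$ or $l_1$ equals $k_1+n-l_1-2$. The spacing inequality $k_1-l_1\geq 1$ bounds this from below by $n-1$, with equality if and only if $l_1=k_1-1$.

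The remaining inversions are exactly the inversions of the sub-sequence $w'=k_2\dots k_m l_* l_m\dots l_2$ (with $l_*$ absent in the even case). Replacing each entry of $w'$ by its rank among the $n-2$ values that appear in it produces a genuine permutation of $\{1,\dots,n-2\}$ of the same structural shape for size $m-1$, and the spacing inequalities $l_i<k_i$ for $i\geq 2$ are preserved because the rank map is order-preserving. The inductive hypothesis thus yields $|w'|\geq (m-1)^2$ in the even case and $|w'|\geq (m-1)^2+(m-1)$ in the odd case.

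Adding the two contributions gives $|w|\geq m^2$ (respectively $|w|\geq m^2+m$), which proves the auxiliary bound. Moreover, the assumption $|w|=m^2$ (respectively $m^2+m$) forces both terms to be minimal; in particular the first contribution must equal $n-1$, which is precisely the identity $l_1=k_1-1$ that we want. The only slightly delicate point is verifying that the rank reduction of $w'$ really is a spacing-condition permutation of size $m-1$, but this is immediate from the order-preserving nature of the bijection $v\mapsto v-[v>l_1]-[v>k_1]$ on $\{1,\dots,n\}\setminus\{k_1,l_1\}$.
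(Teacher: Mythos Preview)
Your argument is correct, and it follows a genuinely different route from the paper's.

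The paper argues by contradiction: assuming $l_1<k_1-1$, it locates the value $k_1-1$ elsewhere in $w$ and performs a single transposition (either swapping that value with $l_1$, or with $k_1$, depending on whether it lies among the $l$'s or the $k$'s) to produce a new permutation $\tilde w$ that still satisfies the spacing condition but has strictly smaller length. The contradiction then comes from the \emph{geometric} fact, quoted from \cite{Fels2006}, that no Iwasawa--Schubert variety of dimension below $m^2$ (respectively $m^2+m$) can meet the base cycle; combined with Proposition~6 this forces $|\tilde w|\ge m^2$, which is impossible.

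Your proof is purely combinatorial and self-contained. By isolating the inversions that involve the extreme positions and applying induction to the rank-reduced middle word, you establish directly the lower bound $|w|\ge m^2$ (respectively $m^2+m$) for \emph{every} spacing-condition permutation, with equality in the outer contribution forcing $l_1=k_1-1$. This buys you an independent proof of the minimality bound that the paper imports from the geometry, and it makes the lemma logically prior to Proposition~6 rather than dependent on it. The paper's approach, by contrast, is shorter and illustrates the swap technique reused later, but it leans on the Schubert--cycle intersection theory for the key inequality.
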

\begin{proof}
Suppose that $l_1<k_1-1$. Then there exist $j>1$ such that $p_j=k_1-1$ sits on position $j$ inside $w$. If $p_j$ sits among the $l$'s,  then construct $\tilde{w}$ by making the transposition $(j,n)$ that interchanges $p_j$ and $l_1$. If $p_j$ sits among the $k$'s  then construct $\tilde{w}$ by interchanging $k_1$ with $p_j$. Observe that $\tilde{w}$ still satisfies the spacing condition and $|\tilde{w}|\le m^2-1$ since in the first case all elements smaller then $k_1-1$ (at least one element, namely $l_1$) do not need to cross over $k_1-1$ anymore. In the second case $p_j$ does not need to cross over $k_1$ anymore and since $p_j=k_1-1$, the elements that need to cross $k_1$ on position $j$ remain the same as  the elements that needed to cross $p_j$ in the initial permutation. But this contradicts the fact that no Schubert variety of dimension less than $m^2$ intersects the cycle.
\noindent
\bigskip
\newline
For the odd dimensional case just add $l_*$ to the representation and consider $|w|=m^2+m$. The only case remaining to be considered is that where $l_*=k_1-1$. In this case we interchange $l_*$ with $l_1$ and observe that this still satisfies the spacing condition and it is of dimension strictly smaller then $m^2+m$. As above this implies a contradiction.
\end{proof}

\begin{thm}
A Schubert variety $S_w$ belongs to $\mathcal{S}_{C_0}$ if and only if $w$ satisfies the double box contraction condition. In particular, in this case $w$ satisfies the spacing condition and $|w|=m^2$, for $n=2m$, and $|w|=m^2+m$. 
\end{thm}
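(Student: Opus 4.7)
The plan is to reduce the theorem to a purely combinatorial induction on $m$. By Proposition 6 together with the dimension constraint $\dim S_w + \dim C_0 = \dim Z$, membership in $\mathcal{S}_{C_0}$ is equivalent to $w$ satisfying the spacing condition together with $|w| = m^2$ (respectively $|w| = m^2+m$ in the odd case). So it suffices to show that, among spacing-permutations of the prescribed shape, those of length $m^2$ (resp.\ $m^2+m$) are exactly the ones produced by the immediate predecessor algorithm.

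For the forward direction I would begin from $S_w \in \mathcal{S}_{C_0}$ and invoke Lemma 7 to obtain $l_1 = k_1-1$. Form the permutation $w'$ of $\{1,\dots,n-2\}$ obtained by deleting the positions carrying $k_1$ and $l_1$ from $w$ and relabelling the remaining values order-preservingly. Because the relabelling is monotone, $w'$ inherits the spacing condition and the shape $k'_1\dots k'_{m-1} l'_* l'_{m-1}\dots l'_1$ from $w$. The key arithmetic step is an inversion count: since $k_1$ sits at position $1$, it contributes $k_1-1$ inversions, and since $l_1=k_1-1$ sits at position $n$ it contributes $n-k_1+1$ inversions; the pair $(1,n)$ is counted twice, so the total number of inversions of $w$ involving either $k_1$ or $l_1$ is exactly $n-1$. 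Consequently $|w'|=|w|-(n-1)$, which evaluates to $(m-1)^2$ in the even case and $(m-1)^2+(m-1)$ in the odd case. The inductive hypothesis applied to $w'$ expresses it via the immediate predecessor algorithm on $\{1,\dots,n-2\}$, and reintroducing the outer pair $(k_1,k_1-1)$ realises $w$ as produced by the algorithm on $\{1,\dots,n\}$.

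For the reverse direction one checks directly from Definition 5 that any permutation built by the immediate predecessor algorithm satisfies the spacing condition, since $l_{i+1}$ is placed strictly to the left of $k_{i+1}$ inside the currently available ordered subset. The length identity $|w|=m^2$ (or $m^2+m$) is then obtained by running the same induction in reverse: stripping off the outer pair $(k_1,l_1)$ yields an immediate predecessor permutation on $n-2$ letters whose length is $(m-1)^2$ (resp.\ $(m-1)^2+(m-1)$) by inductive hypothesis, and adding back the $n-1$ inversions computed above gives the desired total. The base cases $n=2$ and $n=3$ are immediate: the unique candidates are $21$, $231$, and $312$, with lengths $1$, $2$, $2$ respectively.

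The main obstacle is making the inversion bookkeeping transfer cleanly through the relabelling step; one must verify that no inversion between the removed pair $\{k_1,l_1\}$ and the retained entries is lost or double counted, and that inversions internal to the retained entries are preserved by the order-preserving relabelling. Once this accounting is secured, both implications collapse into a single induction on $m$, and the dimension assertions in the theorem statement follow automatically from the length formula.
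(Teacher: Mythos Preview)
Your proposal is correct and follows essentially the same route as the paper: reduce membership in $\mathcal{S}_{C_0}$ to the spacing condition together with the length constraint via Proposition~6, peel off the outer pair $(k_1,l_1)$ using Lemma~7, relabel order-preservingly to land in the $n-2$ situation, and close the induction by accounting for the length difference. Your inversion bookkeeping, counting the $(k_1-1)+(n-k_1+1)-1=n-1$ inversions that involve position $1$ or position $n$, is the same computation the paper performs, only the paper organises it via the ``distance'' quantities $p_\varepsilon$ (adding $k_1-2$ for elements below $l_1$ that must pass $k_1$, then $n-(k_1-1)$ for moving $l_1$ itself); both yield the increment $2m-1$ (even case) or $2m$ (odd case). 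Your separation into explicit forward and backward directions is cleaner than the paper's somewhat compressed simultaneous treatment, but there is no substantive difference in content.
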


\begin{proof}
We prove the theorem using induction on dimension. The notation $w=(k,l_*,l)$ is used to represent the full sequence  $w=k_1\dots k_ml_*l_m\dots l_1$.
\newline
By the lemma above we see that $l_1$ must be defined by $l_1:=k_1-1$. Remove $l_1$ and $k_1$ from the set $\{1,\dots, n\}$ to obtain a set $\Sigma$ with $n-2$ elements. Define a bijective map  $\phi:\Sigma\rightarrow \{1,2,\dots,n-2\}$ by $\phi(x)=x$ for $x<l_1$ and $\phi(x)=x-2$ for $x>k_1$. By induction one constructs all possible $\tilde{w}=(\tilde{k},\tilde{l}_*,\tilde{l}) \in\Sigma_{n-2}$ using the immediate predecessor algorithm. Each such permutation parametrizes a Schubert variety $S_{\tilde{w}}$ in $S_{\tilde{C}_0}$.
\noindent
\bigskip
\newline
Now return to the original situation by defining for each $\tilde{w}$ a corresponding $w\in \Sigma_n$ with $w(1)=k_1$, $w(n)=l_1$ and $w(i+1)=\phi^{-1}(\tilde{w}(i))$ for $1\le i\le n-2$. It is immediate that $w$ satisfies the double box contraction condition. Hence it remains to compute $|(k,l_*,l)|$.   Let $\tilde{p}_j$ be the distances for the permutation $|(\tilde{k},\tilde{l}_*,\tilde{l})|$. First consider those elements $\varepsilon$ of the full sequence $(k,l_*,l)$ which are smaller than $l_1$ in particular which are smaller than $k_1$. In order to move them to their appropriate position one needs the number of steps $\tilde{p}_\varepsilon$ to do the same for their associated point in $(\tilde{k},\tilde{l}_*,\tilde{l})$ plus $1$ for having to pass $k_1$. Thus in order to compute $|(k,l_*,l)|$ from $|(\tilde{k},\tilde{l}_*,\tilde{l})|$ we must first add $k_1-2$ to the former. Having done the above, we now move $l_1$ to its place directly to the left of $k_1$. This requires crossing $2m+1-(k_1-1)$ larger numbers in the odd dimensional case and $2m-(k_1-1)$ numbers in the even dimensional case. So together we have now added $2m$ in the odd dimensional case and $2m-1$ in the even dimensional case to $|(\tilde{k},\tilde{l}_*,\tilde{l})|$ and $|(\tilde{k},\tilde{l})|$, respectively. All other necessary moves are not affected by the transfer to $|(\tilde{k},\tilde{l}_*,\tilde{l})|$ and back. So for those elements we have $\tilde{p}_\varepsilon=p_\varepsilon$ and it follows that $$|(k,l_*,l)|=|(\tilde{k},\tilde{l}_*,\tilde{l})|+2m=(m-1)^2+(m-1)+2m=m^2+m,$$
in the odd dimensional case and $$|(k,l)|=|(\tilde{k},\tilde{l})|+2m-1=(m-1)^2+2m-1=m^2,$$
in the even dimensional case.
\end{proof}

\begin{cor}
The number of Schubert varieties that intersect the cycle is the double factorial $n!!=(n-1)\cdot(n-3)\cdot\dots\cdot 1$.
\end{cor}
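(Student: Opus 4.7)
The plan is to invoke Theorem 8, which identifies $\mathcal{S}_{C_0}$ with exactly the set of permutations $w\in\Sigma_n$ producible by the immediate predecessor algorithm of Definition 7, and then count these permutations directly by a product-rule argument indexed by the steps of the algorithm.

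The key combinatorial observation I would use is that at step $i+1$, the remaining ordered set $\{1,\dots,n\}\setminus\{k_1,l_1,\dots,k_i,l_i\}$ has cardinality $n-2i$, and a valid choice of $k_{i+1}$ is any element of this set other than its minimum, since an immediate predecessor $l_{i+1}$ must exist inside the set. Hence there are exactly $n-2i-1$ admissible choices at step $i+1$, and $l_{i+1}$ is then forced. I would verify by an easy induction on $n$ that distinct sequences of choices produce distinct permutations: the pair $(k_1,l_1)$ is recovered from $w$ as its first and last one-line entries, and peeling it off reduces the count to the same algorithm run on the $n-2$ remaining entries (after order-preserving relabeling).

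Multiplying the per-step counts and splitting by parity gives, for $n=2m$, an algorithm that runs for $m$ steps and exhausts the set, yielding
\[
\prod_{i=0}^{m-1}(n-2i-1)=(n-1)(n-3)\cdots 3\cdot 1,
\]
while for $n=2m+1$ the algorithm runs for $m$ steps and leaves a single element which is forced to be $l_*$ and contributes no factor, so the count is
\[
\prod_{i=0}^{m-1}(n-2i-1)=(n-1)(n-3)\cdots 4\cdot 2.
\]
In either case this is precisely $n!!$ in the paper's convention.

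No serious obstacle arises; the only point requiring care is the injectivity of the construction used to justify the product rule, which is handled by the recovery of $(k_1,l_1)$ from the endpoints of the one-line form of $w$ as noted above.
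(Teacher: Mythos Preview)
Your proposal is correct and follows essentially the same approach as the paper: both count the outputs of the immediate predecessor algorithm by noting that at each step there are $n-2i-1$ admissible choices for $k_{i+1}$ (anything but the minimum of the remaining set), with $l_{i+1}$ then forced. The paper packages this as an induction via the order-preserving relabeling $\phi:\Sigma\to\{1,\dots,n-2\}$ after stripping $(k_1,l_1)$, whereas you unfold the same recursion into a direct product over the steps; your injectivity check via recovering $(k_1,l_1)$ from the endpoints of the one-line form is exactly what makes the paper's inductive reduction well-defined.
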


\begin{proof}
Observe that $k_1$ can be arbitrary chosen from $\{2,\dots,n\}$ and once it is chosen $l_1$ is fixed. This amounts to $(n-1)$ possibilities for the placement of $k_1$ and $l_1$. Now remove $l_1$ and $k_1$ from $\{1,\dots, n\}$ to obtain the set $\Sigma$ with $n-2$ elements. As before define the bijective map  $\phi:\Sigma\rightarrow \{1,2,\dots,n-2\}$ by $\phi(x)=x$ for $x<l_1$ and $\phi(x)=x-2$ for $x>k_1$. This gives us the sequence $(\tilde{k},\tilde{l}_*,\tilde{l})$ that satisfies our induction assumption and we thus obtain $(n-2-1)\cdot(n-2-3)\dots1$ Schubert varieties. Returning to our original situation one obtains the desired result. 
\end{proof}
\noindent
The next theorem gives a geometric description in terms of flags of the intersection points. The complete flags describing the intersection points are obtained in the following way from the Weyl group element that parametrizes $S_w$. In the case when $n=2m+1$ and $w=k_1\dots k_m l_*l_m\dots l_1$ the points of intersection are given by the following flags 
\begin{equation}
\label{points}
\begin{gathered}
<(\pm i) e_{l_1}+e_{k_1}>\subset\dots \subset <(\pm i) e_{l_1}+e_{k_1},\dots, (\pm i) e_{l_m}+e_{k_m},e_{l_*}>\subset \\
 <(\pm i) e_{l_1}+e_{k_1},\dots, (\pm i) e_{l_m}+e_{k_m},e_{l_*}, e_{l_m}>\subset \dots \subset \C^n. 
\end{gathered}
\end{equation}
In the case when $n=2m$, the complete flags are given by the same expression with the exception that the span of $e_{l_*}$ is removed from the representation. Of course, one must not forget that the positively oriented flags correspond to one open orbit and the negatively oriented flags to the other, but since this are symmetric with respect to the map that reverses orientation we have the same number of intersection points with the base cycle independently of the chosen open orbit.
\begin{thm}
A Schubert variety $S_w$ in $\mathcal{S}_{C_0}$ intersects the base cycle $C_0$ in $2^m$ points in the case when $n=2m+1$ and $2^{m-1}$ points in the case when $n=2m$. The points are given by \eqref{points}.
\end{thm}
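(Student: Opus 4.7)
The plan is to first exhibit each flag listed in \eqref{points} as a point of $S_w\cap C_0$, then to show by induction that these are the only such flags in $S_w\cap D$ (where $D$ denotes the union of the open $G_0$-orbits in $Z$), and finally to extract the counts $2^m$ and $2^{m-1}$ from an orientation argument.

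The vectors $v_j:=(\pm i)e_{l_j}+e_{k_j}$ satisfy $b(v_j,v_j)=(\pm i)^2+1=0$ and are mutually $b$-orthogonal because the pairs $\{k_i,l_i\}$ produced by the double box contraction are disjoint; the vector $e_{l_*}$ appearing in the odd case is also $b$-orthogonal to every $v_j$. Hence $V_m$ is a maximal isotropic subspace, and the upper part of the flag is then determined by the isotropic identity $V_{n-i}=V_i^\perp$. Membership in $\mathcal{O}_w$ is verified by considering $g=[v_1|\cdots|v_n]$: each $v_j$ has value $1$ at row $w(j)$, which is its largest nonzero row, and $v_c(w(j))=0$ for $c>j$ because the supports of distinct $v_c$'s inside $\{k_i,l_i\}\cup\{l_*\}$ are disjoint, so $gw^{-1}\in B$ and $V\in\mathcal{O}_w$. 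The $\tau$-genericity follows the computation in the proof of Proposition 7: from $v_j\pm\bar v_j$ one extracts $e_{l_j}$ and $e_{k_j}$, and these, together with the real basis vectors $e_{l_*}$ and $e_{l_i}$ for $i>j$ that appear in $V_{n-i}$, span $\C^n$, giving $\tau(V_i)\oplus V_{n-i}=\C^n$ by a dimension count.

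For the converse I would induct on $m$ via the reduction $w\mapsto\tilde w$ of the Corollary. Given $V\in S_w\cap D\subset \mathcal{O}_w$, I aim to pin down $V_1=\langle(\pm i)e_{l_1}+e_{k_1}\rangle$. The cell structure forces $v_n=e_{l_1}$, and the isotropic identity $V_{n-1}=V_1^\perp$ combined with $v_n\notin V_{n-1}$ gives $v_1(l_1)\neq 0$. A cascade through the isotropy equations $b(v_a,v_b)=0$ for $a+b\leq n$, organized from the right-most columns of the canonical form inward, successively reduces each $v_{n+1-j}$ to the basis vector $e_{l_j}$; the resulting equations $b(v_1,v_{n+1-j})=v_1(l_j)=0$ for $j\geq 2$ kill all the other free coordinates of $v_1$, and $b(v_1,v_1)=1+v_1(l_1)^2=0$ then gives $v_1(l_1)=\pm i$. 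With $V_1$ pinned, the cell condition $v_c(k_1)=0$ for $c>1$ combined with $b(v_c,v_1)=\pm iv_c(l_1)=0$ places $v_2,\dots,v_{n-1}$ inside $U:=\langle e_r:r\neq k_1,l_1\rangle$; the induced complete flag on $U\cong \C^{n-2}$ is isotropic, $\tilde\tau$-generic, and lies in $\mathcal{O}_{\tilde w}$, so the induction hypothesis supplies the form of $V_2,\dots,V_{n-1}$.

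Putting the steps together, each inductive lift contributes a factor of $2$ from the sign choice for $v_1$, so $|S_w\cap D|=2^m$ in both parities. When $n$ is odd this is the count of $S_w\cap C_0$ since $D$ is a single open orbit; when $n$ is even, the orientation-reversing involution of $\C^n_\R$ acts freely on the $2^m$ solutions and interchanges the two open orbits, so exactly $2^{m-1}$ of them lie in the selected $C_0$. The main obstacle is the right-to-left cascade that reduces the $v_c$'s to basis vectors: the double box contraction is precisely the combinatorial input that keeps each successively introduced isotropy equation linear in the single remaining unknown, and verifying this requires a careful match between the positions of the pairs $(k_j,l_j)$ and the free-entry pattern of the canonical form column by column.
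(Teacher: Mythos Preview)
Your overall architecture is sound and in fact closely parallels the paper's: verify that the flags in \eqref{points} lie in $\mathcal{O}_w\cap C_0$, then show they exhaust the intersection, then count. Your reduction to $U\cong\C^{n-2}$ via $\tilde w$ is a clean way to package the induction, and the orientation argument for the even case is correct. The gap is in the step where you pin down $V_1$.

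Your claimed ``right-to-left cascade'' does not work as stated. First, reducing $v_{n+1-j}$ to $e_{l_j}$ for $j\ge 2$ cannot be done before you know anything about the left columns: the only isotropy conditions available for $v_{n-1}$, say, are $b(v_{n-1},v_1)=0$, and $v_1$ is still unknown. Second, and more seriously, even if you grant $v_{n+1-j}=e_{l_j}$, the equations $b(v_1,v_{n+1-j})=v_1(l_j)=0$ for $j\ge 2$ do \emph{not} kill all the free coordinates of $v_1$: they say nothing about $v_1(k_j)$ for those $j\ge 2$ with $k_j<l_1$. Take $w=4213$ (so $k_1=4$, $l_1=3$, $k_2=2$, $l_2=1$): in canonical form $v_1=(a,b,c,1)^t$, and your recipe gives only $v_1(l_2)=a=0$, leaving $b=v_1(k_2)$ undetermined. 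One still needs $b(v_1,v_2)=0$ with $v_2$ a \emph{left} column to force $b=0$.

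The fix is exactly the cascade the paper runs: for each $p=1,2,\dots,l_1-1$ let $f_p$ be the column with leading $1$ in row $p$ (this may be a left column $v_j$ with $w(j)=k_j=p$ or a right column $v_{n+1-j}$ with $l_j=p$; it is never $v_n$ since $p<l_1$). Then $f_p$ is supported in rows $\le p$, so once $v_1(1)=\dots=v_1(p-1)=0$ the equation $b(v_1,f_p)=0$ reads $v_1(p)=0$. Inducting on $p$ gives $v_1(p)=0$ for all $p<l_1$, after which $b(v_1,v_1)=0$ yields $v_1(l_1)=\pm i$. With $v_1$ thus pinned, your passage to $U=\langle e_r:r\neq k_1,l_1\rangle$ and the induction on $m$ go through; note that the reduced flag is isotropic for $b|_U$, hence automatically lies in some $\tilde C_0$, so you do not need to verify $\tilde\tau$-genericity separately.
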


\begin{proof}
Let $w=k_1\dots k_m l_*l_m\dots l_1$  and consider the canonical form of $\mathcal{O}_w$ given by a matrix $[v_1\dots v_n]$ with $$v_i=\alpha^i_{1}e_1+\dots +\alpha^i_{w(i)-1}e_{w(i)-1}+e_{w(i)}, \quad \forall 1\le i\le n.$$ The isotropic condition on flags translates to the fact that each matrix $[v_1\dots v_{n-i}]$ has the column vector $v_i$ perpendicular to itself and all the other vectors in the matrix. Such a matrix exists for each $i\le m$. Start with the initial matrix and for the first step $i=1$ disregard the last column of the initial matrix, for the second step $i=2$ disregard the last and the pre last column and go on until the step $i=m$ is reached. Looking at this process closely and imposing the isotropic conditions will give us an explicit description of all the intersection points.
\noindent
\bigskip
\newline
For the step $j=1$ the vector $v_n= \alpha^n_{1}e_1+\dots + e^n_{l_1}$, where $l_1=k_1-1$, is disregarded from the matrix. If $l_1=1$ then we are done, because $v_1=\alpha^1_1e_1+e_2$. From $v_1\cdot v_1=0$ it follows that $\alpha^1_1=\pm i$. If $l_1 > 1$, disregarding $v_n$  will create a matrix that has among its columns all vectors that contain a $1$ on entry $p$, where $1\le p<l_1$. Denote such column vectors with $f_p$. Using the relations $v_1\cdot f_1=0, \dots, v_1\cdot(f_{l_1-1})=0$, and computing step by step it follows that $\alpha^1_1=0,\dots, \alpha^1_{l_1-1}=0$. Now the only freedom left is on $\alpha_{l_1}$ and using $v_1\cdot v_1=0$ it follows that $(\alpha^1_{l_1})^2+1=0$. Therefore $$v_1=(\pm i)e_{l_1}+e_{k_1}.$$ The condition $v_1\cdot v_p=0$, for all $2\le p \le n-1$, is equivalent to $\alpha^p_{l_1}\cdot (\pm i)=0$, for all $2\le p \le n-1$, which is further equivalent to $\alpha^p_{l_1}=0$, for all $2\le p\le n-1$. The elements $\alpha^n_p$, for $1\le p \le l_1-1$, are all zero, because in the initial canonical representation of $\mathcal{O}_w$ all columns with a $1$ on the spot $p$, for $1\le p\le l_1-1$, sit before $v_n$ in the matrix and at the right of each such entry the row is completed with zeros.  
\noindent
\bigskip
\newline
For $j=2$, $v_n$ and $v_{n-1}$ are removed from the initial matrix. From the \textit{immediate predecessor} algorithm it follows that either $l_2=k_2-1$ or $l_2=l_1-1$ and $\alpha^{2}_{k_1}=0$. From the previous step $\alpha^{2}_{l_1}=0$. Therefore, even though in this step $v_n$ was removed from the matrix a zero was already created on row $l_1$ in $v_2$ in the previous step. Following the same algorithm as before we create zeros step by step starting with $v_2\cdot f_1=0$ and going further to  $v_2\cdot f_{p}=0$, where $f_p$ is the column where a $1$ sits on row $p$ for all $1\le p\le l_2-1$ exempt of $l_1$ and $k_1$ on which spots the values are already zero. The only freedom that remains is on the spot corresponding to $l_2$. Here, using $v_2\cdot v_2=0$, it follows that $$v_2=(\pm i)e_{l_2}+e_{k_2}.$$
\noindent
The condition $v_2\cdot v_p=0$, for all $3\le p \le n-2$, is equivalent with $\alpha^p_{l_2}\cdot (\pm i)=0$, for all $3\le p \le n-2$ which is further equivalent with $\alpha^p_{l_2}=0$, for all $3\le p\le n-2$. The elements $\alpha^{n-1}_p$, for $1\le p \le l_2-1$, are all zero. The reason for this is that in the initial canonical representation of $\mathcal{O}_w$ all columns with a $1$ on the spot $p$, for $1\le p\le l_2-1$, (with the exception of $p=l_1$ where we have created a zero already) sit before $v_{n-1}$ in the matrix. Moreover, at the right of each such entry the row is completed with zeros. 
\noindent
\bigskip
\newline
Assume that we have shown that $v_{s}=\pm i e_{l_s}+e_{k_s}$, for all $1\le s\le j-1$, with $j\le m$, that $\alpha_{l_s}^p=0$, for all $s+1\le p \le n-s$, and that $\alpha_p^{n-s+1}=0$, for all $1\le p \le l_{s}-1$.
To go to step $j$ one usees the fact that $l_j$ belongs to the set $$\{{k_j-1,l_1-1,\dots, l_{j-1}-1}\}$$ and repeats the procedure. In this case the columns $v_s$ for $n-j+1\le s \le n$ are removed from the matrix. As before zeros are created step by step starting with $v_j\cdot f_1$ and going up to $v_j\cdot f_p$, where $f_p$ is the column where a $1$ sits on row $p$ for all $1\le p \le l_j-1$, except of course when $p \in \{k_1,\dots, k_{j-1},l_{1},\dots {l_{j-1}}\}$ in which case even though the columns corresponding to this elements are not in the matrix their spots in column $j$ were already made zero in the previous step.
\noindent
\bigskip
\newline
It thus follows that the points of intersection $<v_1>\subset\dots \subset \C^n$ are given by the following flags $<(\pm i) e_{l_1}+e_{k_1}>\subset\dots \subset <(\pm i) e_{l_1}+e_{k_1},\dots, (\pm i) e_{l_m}+e_{k_m},e_{l_*}>\subset <(\pm i) e_{l_1}+e_{k_1},\dots, (\pm i) e_{l_m}+e_{k_m},e_{l_*}, e_{l_m}>\subset \dots \subset \C^n $
\end {proof}
\noindent
It thus follows that the homology class $[C_0]$ of the base cycle inside the homology ring of $Z$ is given in terms of the Schubert classes of elements in $\mathcal{S}_{C_0}$ by: $$[C_0]=2^m\sum_{S\in\mathcal{S}_{C_0}}\,[S], \text{ if } n=2m+1, \text{ and }[C_0]=2^{m-1}\sum_{S\in\mathcal{S}_{C_0}}\,[S], \text{ if } n=2m.$$
\noindent
\bigskip
\newline
\subsection{Main results for measurable open orbits in $Z=G/P$}
This section treats the case when $D$ is an open orbit in $Z=G/P$ and $C_0$ is the base cycle in $D$. Recall the notation
$$\mathcal{S}_{C_0}:=\{S_w \text{ Schubert variety }: dim S_w+dim C_0=dimZ \text{ and } S_w\cap C_0 \ne \emptyset \}.$$
The main idea is to lift Schubert varieties $S_w\in \mathcal{S}_{C_0}$ to minimal dimensional Schubert varieties $S_{\hat{w}}$ in $\hat{Z}:=G/B$ that intersect the open orbit $\hat{D}$ and consequently the base cycle $\hat{C}_0$. 
\noindent
\bigskip
\newline
The first step is to consider the situation when $D$ is a measurable open orbit. 
There are many equivalent ways of defining measurability in general. In our context however, this depends only on the dimension sequence $d$ that defines the parabolic subgroup $P$. Namely, an open $SL(n,\R)$ orbit $D$ in $Z=G/P$ is called \textit{measurable} if $P$ is defined by a symmetric dimension sequence as follows:
\begin{itemize}
\item{$d=(d_1,\dots, d_s,d_s,\dots, d_1)$ or $e=(d_1.\dots, d_s,e',d_s \dots d_1 )$, for $n=2m$, }
\item{$e=(d_1.\dots, d_s,e',d_s \dots d_1)$, for $n=2m+1$.}
\end{itemize}  
A general definition of measurability can be found in \cite{Fels2006} and the result used here can be found in \cite{Huckleberry2002}.
\noindent
\bigskip
\newline
As discussed in the preliminaries, a Schubert variety $S_w$ in $Z$ is indexed by the minimal representative $w$ of the parametrization coset associated to the dimension sequence defining $P$. Corresponding to the symmetric dimension sequence $d$, each such $w$ can be divided into blocks $B_j$ and $\tilde{B}_j$, both having the same number of elements $d_j$ for $1\le i \le s$. For example, $B_1=(w(1)<w(2)<\dots <w(d_1))$ and $\tilde{B}_1=(w(d_1+\dots +d_s+d_s+\dots+d_2+1)<\dots <w(n))$. The pairs $(B_j,\tilde{B}_j)$ are called \textbf{symmetric block pairs}. In the case of the symmetric dimension symbol $e$, one single block $B_{e'}$ of length $e'$ is introduced in the middle of $w$. In what follows $w$ will always correspond to a symmetric dimension sequence and it will satisfy the conditions of a minimal representative.

\begin{defi}
A permutation $w$ is said to satisfy the \textbf{generalized spacing condition}, if for each symmetric block pair $(B_j,\tilde{B}_j)$ the elements of $\tilde{B}_j$ can be arranged in such a way that if the elements of $B_j$ are denoted by $k_1^j\dots k_{d_j}^j$ and the rearranged elements of $\tilde{B}_j$ by $l_{1}^j\dots l_{d_j}^j$, then $l_i^j<k_i^j$ for all $1\le i \le d_j$. 
\end{defi}  
\noindent
Observe that in the case of the symmetric dimension sequence $e$ we can always rearrange the elements in the single block $B_{e'}$ so that they satisfy the spacing condition inside the block.

\begin{defi}
A permutation $B_1\dots B_sB_{e'} \tilde{B}_s\dots \tilde{B_1}$ is said to satisfy the \textbf{generalized double box contraction} condition if $w$ is constructed by the \textbf{generalized immediate predecessor algorithm}: 
\newline
 $\bullet$ The first symmetric block pair $(B_1=(k_i^1),\tilde{B}_1=(l_i^1))$ is constructed by choosing $d_1$ pairs $(l_i^1,k_i^1)$ of consecutive numbers from the ordered set $\{1,\dots, n\}$  
  \newline
 $\bullet$ If all symmetric block pairs up to $(B_j=(k_i^{j}),\tilde{B}_j=(l_i^{j}))$ have been chosen, then to go to the step $j+1$ choose ${d_{j+1}}$ pairs $(l_{i}^{j+1},k_i^{j+1})$ in such a way that $l_i^{j+1}$ sits at the immediate left of $k_i^{j+1}$ in the ordered set $\{1,\dots, n\}-\{\cup_{i=1}^jB_i\}-\{\cup_{i=1}^j\tilde{B}_i\}$, for all $1\le i \le d_{j+1}$.
\end{defi}
\noindent
Observe that in the case of the symmetric symbol $e$ the elements in the single block $B_{e'}$ can always be rearranged so that they satisfy the double box contraction condition inside the block. The symbol $B_{e'}$ will always be written in a representation of $w$ and disregarded in the case of the symbol $d$.
\noindent
\bigskip
\newline
If $w$ satisfies the generalized spacing condition, $\tilde{w}$ denotes the permutation obtain from $w$ by replacing each block $\tilde{B}_j=l_1^j\dots l_{d_j}^j$ with a choice of rearrangement of its elements $\tilde{l}_1^j\dots \tilde{l}_{d_j}^j$ required so that $w$ satisfies the generalized spacing condition. Further inside each rearranged block $\tilde{B}_j$ in $\tilde{w}$, $\tilde{l}_1^j\dots \tilde{l}_{d_j}^j$  is rewritten as $\tilde{l}_{d_j}^j \tilde{l}_{d_j-1}^j\dots \tilde{l}_1^j.$ In the case of $B_{e'}$ being part of the representation of $w$ the following rearrangement is chosen : if $e'$ is even then $B_{e'}$ is rearranged as $l'_{e'/2+1}$ $l'_{e'/2+2}\dots l'_{e'}l'_1\dots l'_{e'/2-1}l'_{e'/2}$ and if $e'$ is odd then $B_{e'}$ is rearranged as $l'_{(e'+1)/2+1}$ $l'_{(e'+1)/2+2}\dots l'_{e'}l'_1\dots l'_{(e'+1)/2-1}l'_{(e'+1)/2}$. Observe that now $\tilde{w}$ is a permutation that satisfies the spacing condition for the $G/B$ case and it is of course also just another representative of the coset that parametrizes $S_w$.

\begin{prop}
A Schubert variety $S_w$ parametrized by the permutation $$w=B_1B_2\dots B_sB_{e'}\tilde{B}_s\dots \tilde{B}_2\tilde{B}_1$$ has non-empty intersection with $C_0$ if and only if $w$ satisfies the generalized spacing condition, i.e. if and only if there exists a lift of $S_w$ to a Schubert variety $\tilde{S}_{\tilde{w}}$ that intersects the base cycle in $Z=G/B$.
\end{prop}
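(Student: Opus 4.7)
The plan is to reduce this statement to Proposition 5 via the fibration $\pi:\hat{Z}=G/B\rightarrow Z=G/P$, showing that the generalized spacing condition on $w$ is equivalent to the (ordinary) spacing condition on the explicit representative $\tilde{w}$ of the coset $wW_P$ described immediately before the proposition, and that intersecting $C_0$ in $Z$ is equivalent to intersecting $\hat{C}_0$ in $\hat{Z}$ under the lift. Since $\tilde{w}$ is obtained from $w$ only by rearranging elements inside the blocks $\tilde{B}_j$ (and $B_{e'}$ if present), it represents the same coset in $W/W_P$, so $\tilde{w}\in wW_P$ and $\pi(\mathcal{O}_{\tilde{w}})=BwP/P$ is the open Schubert cell of $S_w$. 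Consequently, the projection $\pi$ sends $\tilde{S}_{\tilde{w}}$ into $S_w$ birationally onto its image.

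\noindent
For the direction ``generalized spacing $\Rightarrow$ $S_w\cap C_0\neq\emptyset$'', I would first check directly from the construction that $\tilde{w}$ satisfies the ordinary spacing condition of Definition 4: the requirement $l_i^j<k_i^j$ inside each symmetric block pair, together with the specific inside-block reversal $\tilde{l}_{d_j}^j\tilde{l}_{d_j-1}^j\dots\tilde{l}_1^j$ and the middle-block rearrangement for $B_{e'}$, is exactly designed so that when $\tilde{w}$ is read in one-line notation, each symbol in the second half is smaller than the symbol in the mirrored position of the first half. By Proposition 5 applied to $\tilde{w}$, the Schubert variety $\tilde{S}_{\tilde{w}}$ meets $\hat{C}_0$. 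Projecting any point of that intersection via $\pi$ and using that $\pi(\hat{C}_0)\subseteq C_0$ (both being $K$-orbits, with $\hat{C}_0=K.\hat{z}_0$ mapping onto $K.\pi(\hat{z}_0)\subseteq C_0$; in the measurable case this projection is in fact surjective), one obtains a point of $S_w\cap C_0$.

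\noindent
For the converse direction, I would begin with a point $z\in S_w\cap C_0$. Using the general facts cited earlier (no Schubert variety of dimension less than $\codim C_0$ meets $C_0$, and $S_w\cap D\subset\mathcal{O}_w$), $z$ lies in the open cell $BwP/P$, so it corresponds to a $\tau$-generic isotropic partial flag of type $d$. The fiber $\pi^{-1}(z)\subset\hat{Z}$ is a flag manifold for the Levi of $P$, and inside $\hat{C}_0$ one can find a lift $\hat{z}$ of $z$ whose refinement in each block $\tilde{B}_j$ is compatible with the isotropic pairing $b$ between $V_j$ and its complement (this is the place where the measurability and the symmetric shape of $d$ are used essentially). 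This lift lies in some Schubert cell $B vB/B$ of $\hat{Z}$ with $v\in wW_P$; by Proposition 5, $v$ satisfies the ordinary spacing condition. Since $v$ differs from $w$ only by permutations within the blocks $B_j,\tilde{B}_j,B_{e'}$, the existence of such a $v$ is precisely the generalized spacing condition on $w$, and taking $\tilde{w}=v$ gives the required lift $\tilde{S}_{\tilde{w}}$.

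\noindent
The main obstacle I expect is the converse direction, specifically guaranteeing that the chosen lift $\hat{z}$ of $z\in S_w\cap C_0$ actually lands in $\hat{C}_0$ and not merely in $\hat{D}$. This requires a precise description of the fiber $\pi^{-1}(z)\cap \hat{C}_0$, which in the measurable case inherits the structure of a product of base cycles for smaller flag domains (the Levi factors), so that refining an isotropic partial flag to an isotropic complete flag is always possible. Once this fiberwise statement is set up cleanly, reading off the rearrangement of $\tilde{B}_j$ from the refinement gives the required $\tilde{w}$ and closes the argument.
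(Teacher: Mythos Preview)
Your proposal is correct and follows essentially the same route as the paper: both directions go through $\pi$ and reduce to the $G/B$ spacing criterion. The obstacle you flag in the last paragraph is not a real one, and the paper does not engage with the fiber structure at all: since $\pi$ is $K$-equivariant and both $\hat{C}_0$ and $C_0$ are single $K$-orbits, the restriction $\pi|_{\hat{C}_0}:\hat{C}_0\to C_0$ is automatically surjective, so any $z\in S_w\cap C_0$ has \emph{some} lift $\hat{z}\in\hat{C}_0$; together with your observation that $z\in\mathcal{O}_w$ (from $S_w\cap D\subset\mathcal{O}_w$) this forces $\hat{z}$ into a cell indexed by some $\tilde{w}\in wW_P$, and the $G/B$ result finishes the argument without any further analysis of $\pi^{-1}(z)\cap\hat{C}_0$.
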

\begin{proof}
If $w$ satisfies the generalized spacing condition, then by the above observation one can find another representative $\tilde{w}$ of the parametrization coset of $S_w$, that satisfies the spacing condition and thus a Schubert variety $S_{\tilde{w}}$ that intersects $\hat{C}_0$. Because the projection map $\pi$ is equivariant it follows that $\pi(S_{\tilde{w}})=S_w$ intersects $C_0$.
\newline
Conversely, suppose that $S_w\cap C_0 \ne \emptyset$. Then for every point $p \in S_w\cap C_0 $ there exist $\hat{p}\in S_{\tilde{w}} \cap \hat{C}_0$ with $\pi (\hat{p})=p$ and $\pi(S_{\tilde{w}})=S_w$ for some Schubert variety in $G/B$ indexed by $\tilde{w}$. It follows that $\tilde{w}$ satisfies the spacing condition and $w$ is obtained from $\tilde{w}$ by dividing $\tilde{w}$ into blocks $B_1\dots B_sB_{e'}\tilde{B}_s\dots \tilde{B}_1$ and arranging the elements in each such block in increasing order. This shows that $w$ satisfies the generalized spacing condition.  
\end{proof}
\noindent
If $w=B_{d_1}\dots B_{d_s}\tilde{B}_{d_s}\dots \tilde{B_{1}}$, with $\tilde{B}_{d_j}=l^j_1\dots l^j_{d_j}$ for each $1\le j\le s$, then let $\hat{w}:=B_{d_1}\dots B_{d_s}\tilde{C}_{d_s}\dots \tilde{C}_{d_1}$, where $\tilde{C}_{d_j}=l^j_{d_j}l^j_{d_j-1}\dots l^j_2l^j_1$ for each $1\le j\le s$. If $B_{e'}=l'_1\dots l'_{e'}$ is part of the representation of $w$, then let $\tilde{B}_{e'}$ be the single middle block in the representation of $\hat{w}$ defined by: $l'_{e'/2+1}l'_{e'/2+2}\dots l'_{e'}l'_1\dots l'_{e'/2-1}l'_{e'/2}$ if $e'$ is even and $l'_{(e'+1)/2+1}l'_{(e'+1)/2+2}\dots l'_{e'}l'_1\dots l'_{(e'+1)/2-1}l'_{(e'+1)/2}$ if $e'$ is odd. Call such a choice of rearrangement for $w$ a \textbf{canonical rearrangement}. 
\noindent
\bigskip
\newline
Note that if $S_w\in \mathcal{S}_{C_0}$ lifts to $S_{\hat{w}}$, such that $\hat{w}$ satisfies the double box contraction condition, then $\dim S_{\hat{w}}-\dim S_w=(\dim \hat{Z}-\dim \hat{C}_0)-(\dim Z-\dim C_0)=(\dim \hat{Z}-\dim Z)-(\dim \hat{C_0}-\dim C_0).$ Since $\pi$ is a $G_0$ and $K_0$ equivariant map, if $F$ denotes the fiber of $\pi$ over a base point $z_0\in C_0$, then the fiber of $\pi|_{\hat{C}_0}:\hat{C}_0\rightarrow C_0$ over $z_0$ is just $F\cap \hat{C}_0$ and $\dim S_{\hat{w}}-\dim S_w$ must equal $\dim F - \dim (F\cap \hat{C}_0)$. 
\noindent
\bigskip
\newline
As stated in the preliminaries in the case of $Z=G/B$ and $m=[n/2]$, the isotropic condition on flags is equivalent to $V_i\subset V_i^{\perp}$ for all $1\le i\le m$, $V_m=V_m^\perp$, if $n$ is even and the flags $V_{n-i}$ are determined by $V_{n-i}=V_{i}^{\perp},$ for all $1\le i \le m$. Thus in the case of the dimension sequence $d=(d_1,\dots, d_s,d_s,\dots,d_1)$, $\dim F- \dim (F\cap C_0)$ is equal to $2\sum_{i=1}^sd_i(d_i-1)/2-\sum_{i=1}^sd_i(d_i-1)/2$ which is equal to $\sum_{i=1}^sd_i(d_i-1)/2$. In the case when the dimension sequence is given by $e=(d_1,\dots, d_s, e', d_s,\dots, d_1 )$ and the base point $z_0$ contains a middle flag of length $e$, it remains to add to the above number the difference in between the dimension of the total fiber over this flag and the dimension of the isotropic flags in this fiber. This is just a special case of the $G/B$ case for a full flag of length $e'$ and the number is $e'(e'-1)/2-(e'/2)^2+(e'/2)$ which equals to $(e'/2)^2$ when $e'$ is even and $e'(e'-1)/2-[(e'-1)/2]^2$ which equals to $(e'-1)(e'+1)/2$, when $e'$ is odd. 
\noindent
\bigskip
\newline
Now if $w$ satisfies the generalized double box contraction condition, then $\hat{w}$ satisfies the double box contraction condition. 
Furthermore, by construction, it is immediate that if $w$ satisfies the generalized double box contraction condition, then $|w|=|\hat{w}|-\sum_{i=1}^sd_i(d_i-1)/2$. That is because the block $\tilde{B}_{d_j}$ is formed by arranging the block $\tilde{C}_{d_j}$ in increasing order and thus crossing $l_1^j$ over $d_{j}-1$ numbers, and more generally $l_i^j$ over $d_{j}-i$ numbers for all $1\le i \le d_j$. Similarly, if $e'$ is part of the representation of $w$ and $e'$ is even then $|w|=|\hat{w}|-\sum_{i=1}^sd_i(d_i-1)/2-(e'/2)^2$ and if $e'$ is odd then $|w|=|\hat{w}|-\sum_{i=1}^sd_i(d_i-1)/2-(e'-1)(e'+1)/4$. 
\begin{thm}
A permutation $w=B_1B_2\dots B_sB_{e'}\tilde{B}_s\dots \tilde{B}_2\tilde{B}_1$ satisfies the generalized double box contraction condition if and only if $w$ parametrizes an Iwasawa-Schubert variety $S_w\in \mathcal{S}_{C_0}$ and the lifting map $f:\mathcal{S}_{C_0}\rightarrow \mathcal{S}_{\hat{C}_0}$ defined by $S_w\mapsto S_{\hat{w}}$, with $\hat{w}$ the canonical rearrangement of $w$, is injective.
\end{thm}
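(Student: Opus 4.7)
My plan is to reduce the statement to the $G/B$ case handled in Theorem 8 by exploiting the canonical rearrangement $w\mapsto\hat w$ together with the $G_0$-equivariant projection $\pi:\hat Z\to Z$. The two key inputs have essentially been assembled in the preceding paragraphs: (i) $w$ satisfies the generalized double box contraction condition if and only if $\hat w$ satisfies the ordinary double box contraction condition; and (ii) the length identity $|\hat w|=|w|+\sum_{j=1}^{s}d_j(d_j-1)/2+\epsilon$, with $\epsilon$ the middle-block correction, matches the codimension difference $(\dim\hat Z-\dim Z)-(\dim\hat C_0-\dim C_0)=\dim F-\dim(F\cap\hat C_0)$.

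For the forward direction, assume $w$ satisfies the generalized double box contraction. The easy half of (i), already recorded above, gives that $\hat w$ satisfies the ordinary double box contraction, and Theorem 8 yields $S_{\hat w}\in\mathcal{S}_{\hat C_0}$. Since the canonical rearrangement permutes elements only inside the blocks of the dimension sequence, $\hat wP=wP$; $G$-equivariance of $\pi$ then gives $\pi(S_{\hat w})=S_w$, and $K_0$-equivariance gives $\pi(\hat C_0)=C_0$, so $\emptyset\neq\pi(S_{\hat w}\cap\hat C_0)\subseteq S_w\cap C_0$. Applying (ii) to $|\hat w|=\dim\hat Z-\dim\hat C_0$ rewrites this as $|w|=\dim Z-\dim C_0$, whence $S_w\in\mathcal{S}_{C_0}$.

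For the converse, assume $S_w\in\mathcal{S}_{C_0}$. Proposition 13 gives the generalized spacing condition on $w$, hence the ordinary spacing condition on $\hat w$, and Proposition 6 gives $S_{\hat w}\cap\hat C_0\neq\emptyset$. Using (ii) in reverse turns $|w|=\dim Z-\dim C_0$ into $|\hat w|=\dim\hat Z-\dim\hat C_0$, so $S_{\hat w}\in\mathcal{S}_{\hat C_0}$ and Theorem 8 forces $\hat w$ to satisfy the ordinary double box contraction. The content of the non-trivial half of (i) is then the observation that the canonical rearrangement structure forces the pairs $(l_I,k_I)$ read off $\hat w$ to be non-interleaving inside each batch of sizes $d_1,\dots,d_s$ (and inside the middle batch when $e'$ is present): if for consecutive pairs in the same batch one had $k_{I-1}$ strictly between $l_I$ and $k_I$, then canonical sorting $l_{I-1}<l_I$ would place $l_I$ strictly between $l_{I-1}$ and $k_{I-1}$ in the residual set at step $I-1$, contradicting the ordinary double box contraction at that step. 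Non-interleaving inside each batch is exactly what upgrades the sequential immediate-predecessor condition for $\hat w$ to the batch-simultaneous immediate-predecessor condition on $w$, i.e., to the generalized double box contraction.

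Injectivity of $f$ is immediate: the canonical rearrangement is a deterministic function of $w$, and $w$ is recovered from $\hat w$ as its unique minimal coset representative, obtained by sorting each block of $\hat w$ in increasing order. The main technical obstacle I foresee is the length bookkeeping through the middle block $B_{e'}$ in its odd and even cases; the second delicate point is the non-interleaving argument above, which however follows from a single-step look-back in the ordinary double box contraction algorithm.
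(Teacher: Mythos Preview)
Your argument is correct and rests on the same scaffolding as the paper: the canonical rearrangement $w\mapsto\hat w$, the length identity, $G_0$- and $K_0$-equivariance of $\pi$, and Theorem~8 for $G/B$. The forward direction is identical to the paper's. For the converse you proceed \emph{directly}: you use the length identity (which, as you implicitly exploit, holds for \emph{every} minimal representative $w$, since reversing an increasing block of size $d_j$ always contributes exactly $d_j(d_j-1)/2$ inversions regardless of any DBC hypothesis) to place $S_{\hat w}$ in $\mathcal{S}_{\hat C_0}$, invoke Theorem~8 to obtain ordinary double box contraction on $\hat w$, and then upgrade to the generalized condition on $w$ via the non-interleaving observation. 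The paper instead runs the converse contrapositively: a failure of generalized DBC forces any DBC-compatible lift to keep two $l$'s in increasing order inside some $\tilde B_j$, so the length gain falls strictly short of $\sum d_j(d_j-1)/2$, contradicting the required codimension. Your organization is more transparent and avoids quantifying over all lifts.

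Two small points worth making explicit in a final write-up: (a) the step ``generalized spacing on $w$ $\Rightarrow$ ordinary spacing on the canonical $\hat w$'' is the elementary fact that a system $\tilde l_i<k_i$ with the $k_i$ increasing admits a matching iff the sorted matching $l_{(i)}<k_i$ already works; (b) in your non-interleaving step the obstructing $k_r$ need not a priori be $k_{I-1}$, but since the $k$'s within a block are increasing one has $l_I<k_r\le k_{I-1}<k_I$, so your reduction to $r=I-1$ is legitimate.
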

\begin{proof}
If $w$ satisfies the generalized double box contraction condition, then by the above observation the canonical rearrangement $\hat{w}$ of $w$ is just another representative of the parametrization coset of $S_w$. Moreover, $\hat{w}$ satisfies the double box contraction condition and thus parametrizes a Schubert variety $S_{\hat{w}}$ that intersects $\hat{C}_0$. Because the projection map $\pi$ is equivariant it follows that $\pi(S_{\tilde{w}})=S_w$ intersects $C_0$. From the remarks before the statement of the theorem it follows that if $w$ satisfies the generalized double box contraction condition, then the difference in dimensions in between $S_{\hat{w}}$ and $S_w$ is achieved, and this happens when one needs to write the elements of each block $\tilde{B_i}$ in strictly decreasing order to form the block $\tilde{C}_j$.
\noindent
\bigskip
\newline
Conversely, suppose that $S_w\in \mathcal{S}_{C_0}$ but $w$ does not satisfy the generalize double box contraction condition. It then follows that there exists a first block pair $(B_j,\tilde{B}_j)$ and a first pair $(k_i^j,l_i^j)$, for some $i$ in between $1$ and $d_j$ such that $l_i^{j-1}$ sits at the immediate left of $l_{i}^j$ and $k_{i}^j$ sits at the immediate right of $k_{i-1}^j$ in the ordered set $\{1,\dots, n\}-\{\cup_{s=1}^{j-1}B_s\}-\{\cup_{s=1}^{j-1}\tilde{B}_s\}$. This means that when $w$ is lifted to $\hat{w}$, the place of $l_{i-1}^j$ and $l_i^j$ remain the same because otherwise $w$ will not satisfy the double box contraction condition. But this implies that the difference in between the length of $\hat{w}$ and the length of $w$ is strictly smaller than what the difference $\dim S_{\hat{w}}-\dim S_{w}$ should be.

\end{proof}
%
\begin{thm}
A Schubert variety $S_w$ in $S_{C_0}$ intersects the base cycle $C_0$ in $2^{d_1+\dots + d_s}$ points in the case where $w$ is given by a symmetric symbol $e=(d_1,\dots,d_s,e,d_s,$ $\dots,d_1)$ and $n=2m+1$, while in the even dimensional case we have $2^{m-1}$  points in the case $d=(d_1,\dots, d_s,d_s,\dots,d_1)$ and $2^{d_1+\dots d_s-1}$ in the case $e=(d_1,\dots,d_s,e,d_s,$ $\dots,d_1)$. 
\end{thm}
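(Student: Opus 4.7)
The approach is to reduce the enumeration to the full flag setting $\hat Z = G/B$ via the canonical rearrangement of Theorem 14, invoke Theorem 10 there, and then analyse the fibers of the projection $\pi\colon\hat Z\to Z$ restricted to the intersection. By Theorem 14, the canonical rearrangement $\hat w$ satisfies the ordinary double box contraction condition, so Theorem 10 identifies $S_{\hat w}\cap \hat C_0$ with the $2^m$ (odd) or $2^{m-1}$ (even) explicit complete flags of \eqref{points}, parametrized by sign patterns $(\epsilon_1,\ldots,\epsilon_m)\in\{\pm 1\}^m$ (modulo the even-dimensional orientation constraint). Equivariance of $\pi$, together with the fact that an isotropic complete flag projects to an isotropic partial flag, ensures $\pi(S_{\hat w}\cap \hat C_0)=S_w\cap C_0$, so the task is to count the image.

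Next I would analyse the structure of $\hat w$ produced by the canonical rearrangement: its first $D:=d_1+\cdots+d_s$ positions hold the $k$-entries of $B_1,\ldots,B_s$ in the original order, its last $D$ positions hold the $l$-entries in reversed order, and the central $e'$ positions form the rearranged middle block $\tilde B_{e'}$. The combinatorial key is that, in the $e$-symbol case, $\tilde B_{e'}$ contains \emph{simultaneously} both the $k$-part and the $l$-part of every middle pair $(k_i,l_i)$, $D+1\le i\le m$, together with the unpaired $l_*$ when $e'$ is odd. Consequently, when forming the partial-flag subspace $V_{\delta_s+e'}$ (the span of the first $D+e'$ vectors of the complete flag), the presence of $e_{l_i}$ inside the span forces $e_{k_i}=v_i-\epsilon_i\ic e_{l_i}$ to lie in the span too, so the sign $\epsilon_i$ is invisible at the level of the partial flag.

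Iterating this absorption shows that every partial-flag subspace $V_{\delta_j}$ depends only on the $D$ left signs $\epsilon_1,\ldots,\epsilon_D$, and conversely flipping any single $\epsilon_i$ with $i\le D$ modifies the line $\langle\epsilon_i\ic e_{l_i}+e_{k_i}\rangle$, hence changes $V_{\delta_j}$ for the smallest $j$ with $\delta_j\ge i$. The elimination steps here are essentially the same column operations used in the proof of Theorem 10, now applied block-by-block, which allows one to identify the surviving parameters. This produces exactly $2^D=2^{d_1+\cdots+d_s}$ distinct partial flags, settling the odd-dimensional case.

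The even dimensional cases require handling the orientation condition that carves out the base cycle. The key claim is that the orientation of the lifted complete flag is determined by the $D$ left signs $\epsilon_1,\ldots,\epsilon_D$ alone, so that flipping a middle sign $\epsilon_i$ with $i>D$ preserves membership in $\hat C_0$. Granted this, the $2^D$ candidate partial flags split in halves according to orientation, yielding $2^{D-1}$ admissible ones. In the $d$-symbol case there is no middle block, $D=m$, and we recover the $2^{m-1}$ count of Theorem 10; in the $e$-symbol case we obtain $2^{d_1+\cdots+d_s-1}$. The orientation analysis is the main obstacle, since it couples the complete-flag construction of \eqref{points} to the spinor component of the Lagrangian $V_m$; I would handle it either by an explicit determinantal computation on the real basis $(v_i,\ic v_i)$ showing that the middle-sign flips cancel in pairs, or by tracking the component invariant of $V_m$ along the $K_0$-action and reducing it to the left-sign data.
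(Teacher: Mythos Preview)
Your strategy—lift via the canonical rearrangement $\hat w$, apply Theorem~10, and project—is exactly the paper's lifting principle; the paper's own proof is a single sentence, so your explicit fiber analysis adds real content. The argument that the partial flag depends only on the outer signs $\epsilon_1,\dots,\epsilon_D$ (because the rearranged middle block of $\hat w$ contributes both $e_{k_i}$ and $e_{l_i}$ to the same $V_{\delta_j}$, so $e_{k_i}=v_i-\epsilon_i\, i\, e_{l_i}$ is already in the span) is correct and cleanly explained.

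The gap is your orientation claim in the even $e$-case. The component of the Lagrangian $V_m=\langle \epsilon_j\, i\, e_{l_j}+e_{k_j}:1\le j\le m\rangle$ in $OG(m,2m)$ is governed by the full product $\epsilon_1\cdots\epsilon_m$: if $V_m(\epsilon)$ and $V_m(\epsilon')$ differ in exactly $k$ signs then $\dim(V_m(\epsilon)\cap V_m(\epsilon'))=m-k$, and the two lie in the same component iff $k$ is even. Hence flipping any single sign—middle or not—switches components, so your assertion that middle-sign flips preserve membership in $\hat C_0$ is false, and the determinantal check you propose would reveal this rather than confirm it. Already for $n=4$, $e=(1,2,1)$, $\hat w=2431$: the two positively oriented complete flags are $(\epsilon_1,\epsilon_2)=(+,+)$ and $(-,-)$, and these project to two \emph{distinct} partial flags $V_1=\langle \pm ie_1+e_2\rangle\subset V_3=\langle \pm ie_1+e_2,e_3,e_4\rangle$, not one. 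In fact, for an even $e$-symbol the partial flag contains no half-dimensional subspace, so there is only one open $G_0$-orbit in $Z$ (e.g.\ $\diag(-1,1,1,-1)\in SL(4,\R)$ carries one of the two flags above to the other) and no orientation halving should occur downstairs; the mechanism you describe actually produces $2^{D}$ image points, and it is worth re-examining the stated exponent against this. A secondary issue: the equality $\pi(S_{\hat w}\cap\hat C_0)=S_w\cap C_0$ requires the reverse inclusion—that every isotropic partial flag in $\mathcal O_w$ lifts to an isotropic complete flag in $\mathcal O_{\hat w}$—which does not follow from equivariance alone and deserves its own argument.
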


\begin{proof}
The result follows from the lifting principle, because the intersection points of $S_w$ can be identified in a one-to-one manner with a subset of the intersection points of $S_{\hat{w}}$. Thus the cardinality of this subset can be directly computed.
\end{proof}
\noindent
It thus follows that the homology class $[C_0]$ of the base cycle inside the homology ring of $Z$ is given in terms of the Schubert classes of elements in $\mathcal{S}_{C_0}$ by: $$[C_0]=2^{m-1}\sum_{S\in\mathcal{S}_{C_0}}\,[S], \text{ if } n=2m, \text{ and } Z=Z_d,$$ $$[C_0]=2^{d_1+\dots +d_s-1}\sum_{S\in\mathcal{S}_{C_0}}\,[S], \text{ if } n=2m, \text{ and } Z=Z_e,$$  $$[C_0]=2^{d_1+\dots +d_s}\sum_{S\in\mathcal{S}_{C_0}}\,[S], \text{ if } n=2m+1, \text{ and } Z=Z_e.$$

\subsection{Main results for non-measurable open orbits in $Z=G/P$}

The last case to be considered is the case of $Z=G/P$ and $D\subset Z$ a non-measurable open orbit, i.e. the dimension sequence $f=(f_1,\dots, f_u)$ that defines $P$ is not symmetric. Associated with the flag domain $D$ there exists its measurable model, a canonically defined measurable flag domain $\hat{D}$ in $\hat{Z}=G/\hat{P}$ together with the projection map $\pi: \hat{Z} \rightarrow {Z}$. If $\hat{z}_0$ is a base point in $\hat{C}_0$, $z_0=\pi(\hat{z}_0)$ and we denote by $\hat{F}$ the fiber over $z_0$, by $H_0$ and $\tilde{H}_0$ the isotropy of $G_0$ at $z_0$ and $\hat{z}_0$ respectively, then $\hat{F}\cap\hat{D}=H_0/\hat{H}_0$, is holomorphically isomorphic with $\mathbb{C}^k$, where $k$ is root theoretically computable. 
\noindent
\bigskip
\newline
Moreover, if one considers the extension of the complex conjugation $\tau$ from $\C^n$ to $\SLC$, defined by $\tau(s)(v)=\tau(s(\tau(v)))$, for all $s\in \SLC$, $v\in \C^n$, then one obtains an explicit construction of $\hat{P}$ as follows. Consider the Levi decomposition of $P$ as the semidirect product $P=L\rtimes U$, where $L$ denotes its Levi part and $U$ the unipotent radical. If $P^{-}=L\rtimes U^{-}$ denotes the opposite parabolic subgroup to P, namely the block lower triangular matrix with blocks of size $f_1,\dots f_u$, then $\hat{P}= P\cap \tau(P^{-})$. Furthermore, the parabolic subgroup $\tau(P^{-})$ has dimension sequence $(f_u,\dots, f_1)$. These results are proved in complete generality in \cite{Wolf1995}.
\noindent
\bigskip
\newline
Because $\pi$ is a $K_0$ equivariant map, the restriction of $\pi$ to $\hat{C}_0$ maps $\hat{C}_0$ onto $C_0$ with fiber $\hat{C}_0\cap(\hat{F}\cap\hat{D})$. In this case this fiber is a compact analytic subset of $\mathbb{C}^k$ and consequently it is finite, i.e. the projection $\pi|_{\hat{C}_0}:\hat{C}_0\rightarrow C_0$ is a finite covering map. Because $C_0$ is simply-connected it follows that:
\begin{prop}
The restriction map $$\pi|_{\hat{C}_0}:\hat{C}_0\rightarrow C_0$$
is biholomorphic. In particular, if $q$ and $\hat{q}$ denote the respective codimensions of the cycles, it follows that $\hat{q}=dim(\hat{F})+q$
\end{prop}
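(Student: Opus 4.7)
The plan is to exploit the two pieces of structure that have already been assembled just before the statement: first, that $\pi|_{\hat{C}_0}:\hat{C}_0\to C_0$ is a finite holomorphic covering map, and second, that $C_0$ is simply-connected. Given those two inputs the biholomorphism assertion is essentially automatic from covering-space theory, and the codimension identity then drops out by a dimension count.

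More precisely, I would first recall why the map is a finite covering: the preceding paragraph establishes that the fiber $\hat{C}_0\cap(\hat{F}\cap\hat{D})$ sits inside $\hat{F}\cap\hat{D}\cong\mathbb{C}^k$ as a compact complex analytic subset, hence is a finite set, and that $\pi$ is $K_0$-equivariant with $K_0$ acting transitively on both $\hat{C}_0$ and $C_0$; thus $\pi|_{\hat{C}_0}$ is a surjective submersion with finite fibers, i.e. a finite unramified holomorphic covering. Since $C_0$ is simply-connected (as a compact complex homogeneous space of the form $K/(K\cap B_{z_0})$ with $K=SO(n,\mathbb{C})$ connected and $K\cap B_{z_0}$ connected), every connected covering of $C_0$ is trivial. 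Because $\hat{C}_0$ is irreducible and hence connected, the covering must be one-sheeted, so $\pi|_{\hat{C}_0}$ is a holomorphic bijection between connected compact complex manifolds, and being an unramified covering it is a local biholomorphism. Combining these two observations gives the biholomorphism.

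For the codimension formula, note that $\pi:\hat{Z}\to Z$ is a holomorphic fiber bundle with typical fiber $\hat{F}$, so $\dim\hat{Z}=\dim Z+\dim\hat{F}$. Since the restriction $\pi|_{\hat{C}_0}$ has just been shown to be biholomorphic, $\dim\hat{C}_0=\dim C_0$. Subtracting,
\[
\hat{q}=\dim\hat{Z}-\dim\hat{C}_0=(\dim Z+\dim\hat{F})-\dim C_0=q+\dim\hat{F},
\]
which is the desired identity.

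The only step that requires a little care is the simple-connectedness of $C_0$, since it is invoked tacitly in the statement of the proposition. I would justify it by the $K/(K\cap B_{z_0})$ presentation recalled in Section 2.2: writing $C_0$ as a quotient of the connected semisimple group $K=SO(n,\mathbb{C})$ by a Borel subgroup of $K$ exhibits it as a complex flag manifold, and all such flag manifolds are simply-connected. With that in hand no further work is needed, and this is therefore the main (but very mild) obstacle; the rest of the argument is a formal application of covering-space theory and the bundle dimension formula.
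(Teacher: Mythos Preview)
Your proposal is correct and follows exactly the approach of the paper: the proof in the paper is in fact contained in the paragraph preceding the proposition, which establishes that $\pi|_{\hat{C}_0}$ is a finite covering of the simply-connected space $C_0$, and the proposition is then stated as an immediate consequence. You have simply made explicit the covering-space reasoning and the dimension count that the paper leaves implicit, and added a justification of simple-connectedness via the flag-manifold presentation $C_0\cong K/(K\cap B_{z_0})$, which the paper assumes without comment.
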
   
\noindent
If we denote with $\mathcal{S}_{C_0}$ the set of Schubert varieties $S_w$ in $Z$ that intersect the base cycle $C_0$ and $\dim S_w+\dim C_0=\dim Z$, where $w$ is a minimal representative of the parametrization coset of $S_w$ and by $\mathcal{S}_{\hat{C}_0}$ the analogous set in $\hat{Z}$, then the above discussion implies the following:
\begin{prop}
The map $\Phi: S_{C_0}\rightarrow \pi^{-1}(S_{C_0})\subset S_{\hat{C}_0}$ is bijective.
\end{prop}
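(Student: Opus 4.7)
The plan is to show that the assignment $\Phi: S_w \mapsto \pi^{-1}(S_w)$ is well-defined on $\mathcal{S}_{C_0}$, takes values in $\mathcal{S}_{\hat{C}_0}$, and gives a bijection onto $\pi^{-1}(\mathcal{S}_{C_0})$. The essential geometric input has already been extracted in Proposition~18, so the present statement is largely bookkeeping, with one genuinely technical step.

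First I would verify that for every $S_w\in\mathcal{S}_{C_0}$ the preimage $\pi^{-1}(S_w)$ is itself an Iwasawa-Schubert variety in $\hat{Z}$. The map $\pi:\hat{Z}=G/\hat{P}\to Z=G/P$ is a $G$-equivariant, and in particular $B_I$-equivariant, Zariski-locally trivial holomorphic fibration with irreducible fibre $P/\hat{P}$, so $\pi^{-1}(S_w)$ is closed, $B_I$-invariant, irreducible, and of dimension $\dim S_w+\dim\hat{F}$. Because there are only finitely many $B_I$-orbits in $\hat{Z}$ (the Iwasawa-Schubert cells), every closed irreducible $B_I$-invariant subvariety is automatically the closure of its unique open $B_I$-orbit, hence a Schubert variety. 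This step is the main point where care is required.

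Second, I would check that $\pi^{-1}(S_w)$ satisfies the two defining conditions of $\mathcal{S}_{\hat{C}_0}$. Proposition~18 gives the biholomorphism $\pi|_{\hat{C}_0}:\hat{C}_0\to C_0$, so $\dim\hat{C}_0=\dim C_0$, while the fibre identity reads $\dim\hat{Z}=\dim Z+\dim\hat{F}$. Combining these with $S_w\in\mathcal{S}_{C_0}$ yields
\[
\dim\pi^{-1}(S_w)+\dim\hat{C}_0 = (\dim S_w+\dim\hat{F})+\dim C_0 = \dim Z+\dim\hat{F} = \dim\hat{Z}.
\]
Non-emptiness of the intersection is immediate from $\pi^{-1}(S_w)\cap\hat{C}_0=(\pi|_{\hat{C}_0})^{-1}(S_w\cap C_0)$, which is non-empty because $S_w\cap C_0\neq\emptyset$.

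Finally, injectivity of $\Phi$ follows at once from the surjectivity of $\pi$, since $S_w=\pi(\pi^{-1}(S_w))$ recovers $S_w$ from $\Phi(S_w)$. Surjectivity onto $\pi^{-1}(\mathcal{S}_{C_0})$ is tautological under the natural reading of that set; equivalently, if $\hat{S}\in\mathcal{S}_{\hat{C}_0}$ is any Schubert variety with $\pi(\hat{S})=S_w\in\mathcal{S}_{C_0}$, then $\hat{S}\subseteq\pi^{-1}(S_w)$ with both sides irreducible of dimension $\dim S_w+\dim\hat{F}$, forcing equality $\hat{S}=\pi^{-1}(S_w)=\Phi(S_w)$. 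Thus the only real substance is the orbit-theoretic identification of $\pi^{-1}(S_w)$ as a genuine Schubert variety in step one; once that is in hand, the remaining assertions are formal consequences of Proposition~18.
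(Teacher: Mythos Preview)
Your argument is correct and is precisely the unpacking of what the paper means by ``the above discussion implies the following'': the paper gives no separate proof of this proposition, treating it as an immediate consequence of Proposition~18 and the codimension identity $\hat{q}=\dim\hat{F}+q$. Your three steps---identifying $\pi^{-1}(S_w)$ as a Schubert variety via irreducibility and $B_I$-invariance, matching dimensions via Proposition~18, and recovering $S_w$ from $\pi^{-1}(S_w)$---are exactly the content that the paper leaves implicit.
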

\noindent
If $d=(d_1,\dots,d_s,d_s,\dots,d_1)$ or $e=(d_1,\dots,d_s,e',d_s,\dots,d_1)$ is a symmetric dimension sequence, then one can construct another dimension sequence out of it, not necessarily symmetric, by the following method. Consider an arbitrary sequence $t=(t_1,\dots,t_p)$ such that each $t_i\ge 1$ for all $1\le i\le p$, at least one $t_i$ is strictly bigger than $1$ and $t_1+\dots +t_p=2s$ or $2s+1$ depending on wether one considers $d$ or $e$, respectively. Associated to $t$ the sequence $\delta=(\delta_1,\dots , \delta_p)$ is defined by $\delta_{j}:=\sum_{i=1}^{j}\,t_i$. With the use of $\delta$ the new dimension sequence $f_{\delta}=(f_{\delta_{1}},\dots,f_{\delta_{p}})$ is defined by $f_{\delta_{1}}:=\sum_{i=1}^{\delta_{1}}d_i$,   $$f_{\delta_{j}}:=\sum_{i=\delta_{j-1}+1}^{\delta_{j}}\,d_i, \text{ for all } 2\le j \le p.$$ 
\noindent
\bigskip
\newline
Because $\hat{P}$ is obtained as the intersection of two parabolic subgroups $P$ and $\tau(P^{-})$, it follows that the dimension sequence $f$ of $P$ is obtained as above, from the dimension sequence of $\hat{P}$, as $f_{\delta}$ for a unique choice of $t$. For ease of computation we do not break up anymore the dimension sequence of $\hat{P}$ into its symmetric parts and we simply write it as $d=(d_1,\dots,d_s)$, where $s$ can be both even or odd. 
Using the usual method of computing the dimension of $Z$ it then follows that $$\dim Z=\dim {\hat{Z}}-\sum_{t_j>1}\sum_{\delta_{j-1}+1\le h<g\le \delta_j}\,d_hd_g.$$ For example if $P$ corresponds to the dimension sequence $(2,4,3)$, then an easy computation with matrices shows that $\hat{P}$ corresponds to the dimension sequence $(2,1,3,1,2)$, $t=(1,2,2)$ and $\delta=(1,3,5)$. Moreover, $\dim Z=\dim{\hat{Z}}-1\cdot3-1\cdot2$.
\noindent
\bigskip
\newline
Given the sequence $f=f_{\delta}$, we are now interested in describing the set $\mathcal{S}_{C_0}$. Let $S_{\hat{w}}$ in  $\mathcal{S}_{\hat{C}_0}$ be the unique Schubert variety associated to a given $S_w \in \mathcal{S}_{C_0}$ such that  $\pi (S_{\hat{w}})=S_{w}$.  If $\hat{w}$ is given in block form by $B_1\dots B_s$, where here again the notation used does not take into consideration the symmetric structure of $\hat{w}$, then $w$ is given in block form by $C_1\dots C_p$ corresponding to the dimension sequence $f_\delta$. The blocks $C_j$ are given by $C_1=\bigcup_{i=1}^{\delta_{1}}\, B_{d_i}$ and $$C_j=\bigcup_{i=\delta_{j-1}+1}^{\delta_j}\, B_{d_i}, \text{ for all } 2\le j\le p,$$ arranged in increasing order. Moreover,
\begin{align*}
\dim S_w&=\dim Z-\dim C_0=\dim Z-\dim{\hat{C}_0} \\
&=\dim{\hat{Z}}-\sum_{t_j>1}\sum_{\delta_{j-1}+1\le h<g\le \delta_j}\,d_hd_g -\dim{\hat{C}_0} \\
&=\dim{S_{\hat{w}}}-\sum_{t_j>1}\sum_{\delta_{j-1}+1\le h<g\le \delta_j}\,d_hd_g.
\end{align*}
Finally, understanding what conditions $\hat{w}$ satisfies in order for the above equality to hold amounts to understanding the difference in length that the permutation $\hat{w}$ looses when it is transformed into $w$. If $C_{j}$ contains only one $B$-block from $\hat{w}$, i.e. $t_j=1$, then this is already ordered in increasing order and it does not contribute to the decrease in dimension. If $C_{j}$ contains more $B$-blocks, say $$C_j=\bigcup_{i=\delta_{j-1}+1}^{\delta_j}\, B_{d_i},$$ we start with the first block $B_{\delta_{j-1}+1}$ which is already in increasing order and bring the elements from $B_{\delta_{j-1}+2}$ to their right spots inside the first block. As usual to each number we can associate a distance, i.e. the number of elements it needs to cross  in order to be brought on the right spot, and we denote by $\alpha_{\delta_{j-1}+2}$ the sum of this distances. The maximum value that $\alpha_{\delta_{j-1}+2}$ can attain is when all the elements in the second block are smaller than each element in the first block, i.e. the last element in the second block is smaller than the first element in the first block. In this case $\alpha_{\delta_{j-1}+2}=d_{\delta_{j-1}+1}d_{\delta_{j-1}+2}$, the product of the number of elements in the first block with the number of elements in the second block. Next we bring the elements in the $3^{rd}$ block among the already ordered elements from the first and second block. Observe that the maximal value that $\alpha_{\delta_{j-1}+3}$ can attain is $d_{\delta_{j-1}+1}d_{\delta_{j-1}+3}+d_{\delta_{j-1}+2}d_{\delta_{j-1}+3}$ when the last element in the $3^{rd}$ block is smaller than all elements in the first two blocks. In general we say that the group of blocks used to form  $C_j$ is in \textbf{strictly decreasing order} if it satisfies the following: the last element of block $B_{i+1}$ is smaller than the first element of block $B_i$ for all $\delta_{j-1}+1\le i \le \delta_j-1$. Consequently, if one wants to order this sequence of blocks into increasing order one needs to cross over $$\sum_{\delta_{j-1}+1\le h<g\le \delta_j}\,d_hd_g.$$ Thus if all the blocks $C_j$ with $t_j>1$ among $w$ come from groups of blocks arranged in strictly decreasing order in $\hat{w}$, then $$|w|=|\hat{w}|-\sum_{t_j>1}\sum_{\delta_{j-1}+1\le h<g\le \delta_j}\,d_hd_g.$$
What was just proved is the following:
\begin{prop}
A Schubert variety $S_{\hat{w}}\in \mathcal{S}_{\hat{C}_0}$ is mapped under the projection map to a Schubert variety $S_{w}\in \mathcal{S}_{C_0}$ if and only if all the blocks $C_j$ with $t_j>1$ among $w$ come from groups of blocks arranged in strictly decreasing order in $\hat{w}$.
\end{prop}
\noindent
As an example consider the complex projective space $Z=\mathbb{P}_5$. The dimension sequence of the measurable model in this case is given by $d=(1,4,1)$ and the Schubert varieties in $\mathcal{S}_{\hat{C}_0}$ are parametrized by the following permutations: $(2)(3456)(1)$, $(3)(1456)(2)$, $(4)(1256)(3)$, $(5)(1236)(4)$, $(6)(1234)(5)$. The only permutation that satisfies the strictly decreasing order among the last two blocks is the permutation $(2)(3456)(1)$ and this gives the only Schubert variety in $\mathcal{S}_{C_0}$ parametrized by $(2)(13456)$. More generally, for $Z=\mathbb{P}_n$ we have only one element in $\mathcal{S}_{C_0}$ parametrized by $(2)(13\dots n+1)$. For a more complicated example and for the proof of the following remark, see \cite{Ana}.
\noindent
\bigskip
\newline
\textbf{Remark.} In the context of the previous proposition, if $\pi_S$ denotes the map that sends $S_{\hat{w}}$ to $S_{w}$ and $U$ denotes unipotent radical of $B_I$, then there exists a subgroup $N$ of $U$ which acts freely and transitively on the fibers of $\pi_S$ which are just the fibers of $\pi:\hat{D}\rightarrow D$. This holds for the measurable model of any flag domain of any real form $G_0$.
\subsection* {Concluding remark}
The cases of the other real forms, $SL(m,H)$ and $SU(p,q)$,
of $\SLC$ are handled in detail in the author's thesis \cite{Ana}.  Let us close
our discussion here by briefly commenting on these results.

\bigskip
\noindent
For $n=2m$, $SL(m,\H)$ is defined to be the group of operators
in $\SLC$ which commute with the antilinear map $j:\mathbb{C}^n\to \mathbb{C}^n$,
$v\mapsto J\bar{v}$, where $J$ is the usual symplectic matrix with $J^2=-\mathrm{Id}$.  For simplicity
we only comment here on the case where $Z=G/B$ is the full flag manifold (the other cases are handled
by the expected lifting procedures.).  In that case a certain parity condition on a permutation $\omega =(k,\ell)$
plays the role of the spacing condition used for $G_0=\SLR$.  This condition tightens
up to the strict parity condition where it is required that the $k_i$ are odd and $\ell_i=k_i+1$.  The main result
for $Z=G/B$ can be stated as follows: There is only one open orbit $D$, a Schubert variety (with $B$ an 
Iwasawa-Borel) has non-empty intersection with $C_0$ if and only if $\omega $ satisfies the parity condition
and intersects it in isolated points if and only if it satisfies the strict parity condition.  In the later case the 
intersection consists of exactly one point.

\bigskip
\noindent
The case of the Hermitian groups $\mathrm{SU}(p,q)$, $p+q=n$, seems to be vastly more complicated than the
others.   This is in particular due to the large number of open orbits.  For example, if $z_0$ is a full flag 
$V_1<V_2<\ldots <V_n$ in $Z=G/B$, then $D=G_0.z_0$ is open if and only if the restriction of the mixed signature
form $\langle \ ,\ \rangle_{p,q}$ to each $V_i$ is non-degenerate.  Thus the open orbits are parameterised by
pairs $(a,b)$ of integer vectors with $(a_i,b_i)$ being the signature of the restriction of the form to $V_i$.

\bigskip
\noindent
While our combinatorial conditions are in a certain sense analogous to those for the other real forms, here we only 
give algorithms for determining exactly which Iwasawa-Schubert varieties have non-empty intersection with
a given flag domain $D$ and which intersect $C_0$ in only isolated points. Analogous to the $SL(m,H)$-case,
in the later case there is exactly one point of intersection.  As we show in typical examples and by giving detailed
descriptions of $S_{C_0}$ for the Hermitian symmetric spaces, i.e., $Z=G/P$ with $P$ maximal, the algorithms are quite
efficient (see \cite{Ana}).

\newpage
\bibliographystyle{alphanum}
\bibliography{citations}
Ana-Maria Brecan\\
Jacobs University Bremen\\
Campus Ring 1\\
28759, Bremen, Germany\\
a.brecan@jacobs-university.de
\end{document}